\documentclass[11pt]{amsart}

\usepackage{hyperref}
\usepackage{cleveref}
\usepackage[colorinlistoftodos,bordercolor=orange,backgroundcolor=orange!20,linecolor=orange,textsize=scriptsize]{todonotes}

\usepackage{graphicx, enumerate, url}
\usepackage{amssymb,mathtools}

\usepackage{algorithm}
\usepackage{algpseudocode}
\usepackage{color}
\usepackage{tikz}
\usetikzlibrary{3d,calc}

\numberwithin{equation}{section}
\numberwithin{algorithm}{section}

\theoremstyle{plain}
\newtheorem{theorem}{Theorem}[section]

\newtheorem{lemma}[theorem]{Lemma}
\newtheorem{corollary}[theorem]{Corollary}

\theoremstyle{definition}

\theoremstyle{remark}

\DeclareMathOperator{\E}{E}
\DeclareMathOperator{\tr}{Tr}
\DeclareMathOperator{\diag}{diag}
\DeclareMathOperator{\rank}{rank}




\newcommand{\N}{\mathbb{N}}
\newcommand{\C}{\mathbb{C}}
\newcommand{\F}{\mathbb{F}}
\newcommand{\Q}{\mathbb{Q}}
\newcommand{\R}{\mathbb{R}}
\newcommand{\Z}{\mathbb{Z}}

\newcommand{\x}{\mathbf{x}}

\newcommand{\y}{\mathbf{y}}
\newcommand{\z}{\mathbf{z}}
\newcommand{\U}{\mathbf{U}}

\newcommand{\W}{\mathbf{W}}
\newcommand{\cA}{\mathcal{A}}

\newcommand{\cC}{\mathcal{C}}

\newcommand{\cE}{\mathcal{E}}
\newcommand{\cF}{\mathcal{F}}
\newcommand{\cG}{\mathcal{G}}

\newcommand{\cT}{\mathcal{T}}

\newcommand{\bb}{\mathbf{b}}
\newcommand{\bc}{\mathbf{c}}
\newcommand{\be}{\mathbf{e}}

\newcommand{\bi}{\mathbf{i}}

\newcommand{\bq}{\mathbf{q}}

\newcommand{\bs}{\mathbf{s}}

\newcommand{\bu}{\mathbf{u}}
\newcommand{\bv}{\mathbf{v}}

\newcommand{\bx}{\mathbf{x}}

\newcommand{\rA}{\mathrm{A}}
\newcommand{\rB}{\mathrm{B}}

\newcommand{\rH}{\mathrm{H}}

\newcommand{\rK}{\mathrm{K}}
\newcommand{\rL}{\mathrm{L}}

\newcommand{\rS}{\mathrm{S}}

\newcommand{\0}{\mathbf{0}}


\newcommand{\blue}[1]{\textcolor{blue}{#1}}
\makeatletter
\@namedef{subjclassname@2020}{\textup{2020} Mathematics Subject Classification}
\makeatother

\begin{document}
\title[Characterizations of the numerical radius and its dual norm]
{On semidefinite programming characterizations of the numerical radius and its dual norm}
\author{Shmuel Friedland}
\address{
 Department of Mathematics, Statistics and Computer Science,
 University of Illinois at Chicago, Chicago, Illinois 60607-7045,
 USA, \texttt{friedlan@uic.edu}
 }
 \author{Chi-Kwong Li}
\address{Department of Mathematics , College of William and Mary,
Williamsburg, Virginia 23187-8795, USA 
  \texttt{ckli@math.wm.edu}
 }
 \date{January 21, 2024}
\subjclass[2010]{15A60, 15A69, 68Q25, 68W25, 81P40, 90C22, 90C51}
  	
\keywords{
Numerical radius, dual of numerical radius, semidefinite programming, interior point method, polynomial time approximation, spectral norm of 3-tensors.}
\begin{abstract}  
We state and give self contained proofs of semidefinite programming characterizations of the numerical radius and its dual norm  for matrices.  We show that the computation of the numerical radius and its dual norm within $\varepsilon$ precision are polynomially time computable in the data and $|\log \varepsilon |$ using either the ellipsoid method or the short step, primal interior point method.  We apply our results to give a simple formula for the spectral and nuclear norm of $2\times n\times m$ real tensor in terms of the numerical radius and its dual norm.
\end{abstract}
\maketitle

\section{Introduction}\label{sec:intro}
Let $\C^n$ be the complex space of column vectors $\x=(x_1,\ldots,\x_n)^\top$,  and $\|\x\|=\sqrt{\x^*\x}$.  Denote by $\C^{n\times n}\supset\rH_{n}\supset \rH_{n,+}\supset \rH_{n,++}$ the complex space of $n\times n$ complex valued matrices, the real space of $n\times n$ hermitian matrices, the closed cone of positive semi-definite hermitian matrices and the open cone positive definite hermitian matrices.   For $A,B\in\rH_n$ we denote 
$A\succeq B\,(A\succ B)$ if $A-B\in\rH_{n,+}\,(A-B\in\rH_{n,++})$.
For $C\in\C^{n\times n}$ denote by $\sigma_1(C)\ge \cdots \ge \sigma_n(C)\ge 0$ the singular values of $C$.  Then $\|C\|=\sigma_1(C)$ is the operator norm of $C$.  Recall that the dual norm of the operator norm (the nuclear norm) of $C$ is $\|C\|^\vee=\sum_{i=1}^n \sigma_i(C)$ \cite[Proposition 2.1]{RFP10} or \cite[Corollary 7.5.13]{Fri15}.  Clearly,  $\|C\|$ and $\|C\|^\vee$ are convex functions on $\C^{n\times n}$.

The operator norm and its dual are ubiquitous in theoretical and applied mathematics.
A recent paper  of Recht-Fazel-Parrilo \cite{RFP10} discusses the importance of minimization of $\|C\|^\vee$ for a guaranteed minimum-rank solutions of linear matrix equations via nuclear norm.   
The numerical methods discussed in this paper utilizes
the semidefinite programming (SDP) characterizations of $\|C\|$ and $\|C\|^\vee$.

The aim of this paper is to discuss SDP characterizations of the numerical radius of $r(C)$ and its dual norm $r^\vee(C)$ for $C\in\C^{n\times n}$.  Recall that 
\begin{equation}\label{charrvee}
\begin{aligned}
r(C) &=\max\{|\x^*C\x|:\,\x\in\C^n, \|\x\|=1\},\\
r^\vee(C) &=\max_{r(F)\le 1} \Re\tr F^*C,\\
r(C)&=\max_{r^\vee(F)\le 1} \Re\tr F^*C.
\end{aligned}
\end{equation}
See for example \cite[page 3]{FL16} for the definition and a discussion
of the dual norm in general.  Recall that the dual of the dual norm is the original norm.
Note that $\|C\|,\|C\|^\vee$ and $r(C),r^\vee(C)$ are closely related:
\begin{equation*}
\|C\|/2\le r(C)\le \|C\|, \quad
\|C\|^\vee\le r^\vee(C)\le 2\|C\|^\vee.
\end{equation*}
(See \eqref{rCnrmin}.) Perhaps,  a minimization of $\|\cdot\|^\vee$  in \cite{RFP10} can be changed to a minimization of $r^\vee(\cdot)$?)

We now briefly describe the results of our paper.  First we give the SDP characterizations of $r(C)$ and $r^\vee(C)$:
\begin{equation}\label{SDPcharC}
r(C)=\min\left\{c:\,\begin{bmatrix} cI_n +Z&C\\C^*& cI_n -Z\end{bmatrix}\in \rH_{n,+}\right\}.
\end{equation}
\begin{equation}\label{rveeYchar}
r^\vee(C)=\min\left\{\tr X:\, \begin{bmatrix} X& C\\C^*& X\end{bmatrix}\in \rH_{n,+}\right\}.
\end{equation}
The  characterization \eqref{SDPcharC} can be deduced from the result of Ando [Lemma 1]\cite{And73}.    (See also  \cite[Theorem $2. 1$]{Mat93}.)  The characterization \eqref{rveeYchar} can be deduced from \cite[Part (b) of Corollary  4.3]{Mat93}.   We give an independent proof of these two characterizations.

The classical result of  Gr{\"o}tschel-Lov\'asz-Schrijver on the complexity of the SDP problems using the ellipsoid method, or the 
recent results of de~Klerk-Vallentin on the complexity of interior point method, yield that  the computation complexity 
of $r(C)$ and $r^\vee(C)$ within $\varepsilon$ precision are polynomial in entries of $C$ and $|\log \varepsilon|$.  
It is shown in  \cite{FL16} that the computation of a norm in $\R^n$ is NP-hard if and only if the computation of 
its dual norm is NP-hard.)
Theorem \ref{specnrm2mnchar} shows that the spectral and the nuclear norm of tensors in $\R^2\otimes\R^m\otimes\R^n$ is a 
numerical radius and a half of the dual numerical radius respectively of certain $C\in\C^{(m+n)\times (m+n)}$.  Hence, they are polynomially computable.   The $2\times m\times n$ tensor is the simplest example of $3$-tensor that goes beyond the $m\times n$ matrix.
It corresponds to two bilinear forms \cite{Ja79}.  Its rank is related to  multiplication problems in arithmetic complexity.  Its spectral norm is related to the geometric measure of entanglement of qubit--m-dit--n-dit particles \cite[equation (1)]{FW20}.
The nuclear norm of a tensor corresponds to the energy in quantum mechanics \cite{BFZ}.
A recent result  \cite{HJL23} states that for a fixed $l$ the computation complexity 
of the spectral norm in $\R^l\otimes \R^m\otimes \R^n$ is polynomial in $m,n$.
\section{Conic and semidefinite programming}\label{sec:sdpherm}
\subsection{Slater condition for a conic programming problem}\label{subsec:conic}
In this subsection we recall some known results on conic programming we use in this paper, that can be found in \cite[Chapter 4]{GM12}.  
We call $\rK\subset \R^n$  a {\it proper cone} if the following conditions hold:
\begin{equation*}
\begin{aligned}
\textrm{Closure}(\rK)=\rK,\\
a\rK\subseteq \rK \textrm{ for } a\ge 0,\\
\rK+\rK=\rK,\\
\rK\cap -\rK=\{\0\},\\
\rK-\rK=\R^n.
\end{aligned}
\end{equation*}
In the literature, such a closed convex set is also called a pointed generating cone.
Note that $K$ has interior.
Assume $\langle \cdot,\cdot \rangle$ is an inner product on $\R^n$.
Then
$$\rK^\vee=\{\x\in\R^n:\, \langle \x,\y\rangle \ge 0,  \textrm{ for }\y\in\rK\}$$ 
is the dual cone of $\rK$. 

Let $L\in\R^{m\times n}$, and view $L$ as a linear transformation $L:\R^n\to \R^m$.
In $\R^m$ we assume that we have the standard inner product $\langle \bu,\bv\rangle=\bu^\top \bv$.  Denote by $L^\vee:\R^m\to\R^n$ the linear transformation
satisfying:
\begin{equation}\label{defLvee}
\langle L^\vee \y,\x\rangle=\y^\top L\x \textrm{ for all } \x\in\R^n, \y\in\R^m.
\end{equation}

A conic programming problem in $\R^n$ is:
\begin{equation}\label{conprog}
val=\inf\{\langle \bc,\x\rangle :\, L\x=\bb,  \x\in\rK\}, \quad \bc\in\R^n,\bb\in \R^m
\end{equation}
We say that \eqref{conprog} is feasible (infeasible). 
if the set $\{\x\in \rK, L\x=\bb\}$ is nonempty (empty).  If \eqref{conprog} is  infeasible
then $val=\infty$.  

The dual conic problem to \eqref{conprog} is:
\begin{equation}\label{dconprog}
val^\vee=\sup \{\bb^\top\y:\,\bc-L^\vee \y\in\rK^\vee\}
\end{equation}
If the dual conic problem is infeasible, we let $val^\vee=-\infty$.

It is straightforward to show the weak duality inequality $val\ge val^\vee$.
The Slater constraint condition is \cite[Theorem 4.7.1]{GM12}:
\begin{theorem}\label{Slatcon}
Assume that the feasible set of \eqref{conprog} contains an interior point of the cone $\rK$, and $val$ is finite.  
Then the dual problem \eqref{dconprog} is feasible, and $val=val^\vee$.
\end{theorem}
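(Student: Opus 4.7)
The weak inequality $val \geq val^\vee$ is immediate: for any primal feasible $\x$ (so $L\x = \bb$, $\x \in \rK$) and any dual feasible $\y$ (so $\bc - L^\vee \y \in \rK^\vee$), $\langle \bc, \x\rangle - \bb^\top \y = \langle \bc - L^\vee \y, \x\rangle \geq 0$. The substantive task is to produce a dual feasible $\y$ with $\bb^\top \y$ arbitrarily close to $val$; together with weak duality this settles both dual feasibility and $val = val^\vee$.

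Before anything else I would reduce to the case where $L : \R^n \to \R^m$ is surjective: otherwise restrict the codomain of $L$ to $\im L$, which leaves both problems unchanged because $\bb \in \im L$ by Slater's hypothesis and because $\ker L^\vee = (\im L)^\perp$, so replacing any $\y$ by its orthogonal projection on $\im L$ affects neither $\bb^\top \y$ nor $L^\vee \y$. Under this reduction $L^\vee$ is injective, which will be crucial in the last step.

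The plan is then to separate the convex set
\[
E \;=\; \{(L\x, v) \in \R^m \times \R : \x \in \rK,\ v \geq \langle \bc, \x\rangle\}
\]
from the point $(\bb, val - \epsilon)$ for arbitrary $\epsilon > 0$; this point lies outside $E$ because $val$ is the primal infimum. Separation yields a nonzero pair $(\y, \lambda) \in \R^m \times \R$ with
\[
\y^\top L\x + \lambda v \;\geq\; \y^\top \bb + \lambda(val - \epsilon) \qquad \forall\, \x \in \rK,\ v \geq \langle \bc, \x\rangle.
\]
Sending $v \to \infty$ forces $\lambda \geq 0$. In the good case $\lambda > 0$ I would normalize $\lambda = 1$, put $v = \langle \bc, \x\rangle$, and exploit the cone structure: replacing $\x$ by $t\x'$ for $\x' \in \rK$ and letting $t \to \infty$ yields $\bc + L^\vee \y \in \rK^\vee$, so $\tilde \y := -\y$ is dual feasible, while $\x = 0$ gives $\bb^\top \tilde \y \geq val - \epsilon$. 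Sending $\epsilon \downarrow 0$ then produces $val^\vee \geq val$, and weak duality closes the argument.

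The hard part will be excluding the degenerate case $\lambda = 0$, which would produce only a ``vertical'' hyperplane carrying no dual information. There the inequality reads $\y^\top L\x \geq \y^\top \bb$ for every $\x \in \rK$; evaluating at $\x_0 \pm \epsilon'\bz$, which lies in $\rK$ for every $\bz \in \R^n$ and small $\epsilon' > 0$ thanks to $\x_0 \in \mathrm{int}(\rK)$ with $L\x_0 = \bb$, forces $\y^\top L\bz = 0$ for all $\bz$, i.e.\ $L^\vee \y = 0$. By the injectivity of $L^\vee$ from the preliminary reduction this forces $\y = 0$, contradicting $(\y, \lambda) \neq (0,0)$. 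This is precisely the step where Slater's interior-point hypothesis is essential, and after it only $\lambda > 0$ survives.
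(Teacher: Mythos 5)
Your argument is correct. Be aware, though, that the paper does not prove this theorem at all: it is quoted as \cite[Theorem 4.7.1]{GM12}, so there is no internal proof to compare yours against. What you wrote is a correct, self-contained proof by the standard route, namely hyperplane separation of the convex set $E$ from the point $(\bb, val-\epsilon)$. The two places where such proofs typically fail are handled properly. First, your preliminary reduction to surjective $L$ is genuinely necessary: without it, any nonzero $\y\in(\im L)^\perp$ paired with $\lambda=0$ satisfies the separation inequality with equality throughout (since $\bb\in\im L$), so the degenerate vertical hyperplane could never be excluded by separation alone. Second, the Slater point enters exactly where it must, to show that $\lambda=0$ would force $L^\vee\y=0$ and hence $\y=0$. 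Two small points worth making explicit in a polished write-up: (i) $E$ need not be closed (indeed the possible failure of closedness of such image sets is the whole reason a regularity condition is needed), so you should invoke the separation theorem for a point not belonging to a possibly non-closed convex set --- in finite dimensions this still yields a nonzero normal, via strict separation when $(\bb,val-\epsilon)\notin\overline{E}$ and a supporting hyperplane at a boundary point when $(\bb,val-\epsilon)\in\overline{E}\setminus E$ --- and the weak inequality you state is exactly what it gives; (ii) your construction produces, for every $\epsilon>0$, a dual feasible point with objective at least $val-\epsilon$, which establishes feasibility and $val=val^\vee$ but not attainment of the dual supremum; this is consistent with the statement, which claims only feasibility and equality of values.
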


We now make the following observations.  Let us consider the linear  system 
\begin{equation}\label{Lbsys}
L\x=\bb, \quad \x\in\R^n, \bb\in\R^m.  
\end{equation}
Use Gauss elimination to deduce that either the above system in not solvable, hence 
\eqref{conprog} is unfeasible, or the affine set of solutions to the above system is
\begin{equation*}
\{\x= \x_0+\sum_{i=1}^k s_i\x_i\}, \, \x_1,\ldots,\x_k \textrm{ linearly independent in } \R^n,k\in [n]\cup\{0\}.
\end{equation*}
Here $k$ is the dimension of the affine space given by \eqref{Lbsys}.  
Note we can assume that either $\x_0=\0$, or $\x_0,\ldots,\x_k$ are linearly independent.
In particular, we can assume that 
\begin{equation*}
\langle \x_i,\x_j\rangle=\delta_{ij}, i,j\in[k], \quad \langle \x_0,\x_i\rangle=0, i\in[k].
\end{equation*}

We restrict our discussion to the case that the  system \eqref{Lbsys} is solvable, and corank $L=n-\rank L=k$.  
In that case,  the conic programming problem \eqref{conprog} is
\begin{equation}\label{conprog1}
val=\inf\left\{\langle \bc,\x\rangle:\, \x=\x_0+\sum_{i=1}^k s_i \x_i\in \rK,\bs=(s_1,\ldots,s_k)^\top\in\R^k\right\}.
\end{equation}

To give a corresponding simple representation of the dual problem,  complete the orthonormal set $\x_,\ldots,\x_k$ 
to an orthonormal basis $\x_1,\ldots,\x_n$ in $\R^n$.  
Then it is straightforward to show that the dual problem to \eqref{conprog1} is
\begin{equation}\label{dconprog1}
\begin{aligned}
val^\vee=\sup \left\{\langle\x_0,\bu\rangle:\,\bu=\sum_{j=1}^{n-k} t_j\bx_{j+k}, 
\right.\\ \qquad 
\left.
\bc- \bu\in\rK^\vee,(t_1,\ldots,t_{n-k})^\top\in\R^{n-k}\right\}.
\end{aligned}
\end{equation}
\subsection{Semidefinite programming for real and complex matrices}\label{subsec:SDP}
Let $\F$ be the field of real $\R$ or complex numbers $\C$.  Denote by $\rH_n(\F)$ the real space of selfadjoint matrices $\{A\in\F^{n\times n}, A^*=A\}$.  Thus, $\rH_n(\C)=\rH_n$, and $\rH_n(\R)=\rS_n$-the space of real symmetric matrices of order $n$.
Clearly, $\rH_n(\R)\sim\R^{n(n+1)/2}$ and $\rH_n\sim\R^{n^2}$.
The inner product in $\rH_n(\F)$ is $\langle A,B\rangle=\tr AB$.  Thus $\|A\|_F=\sqrt{\langle A, A\rangle}$ is the Frobenius norm of $A$. For $X_0\in \rH_n(\F)$ and $r\ge 0$ denote by $\rB(X_0,r)=\{X\in\rH_n(\F),  \|X-X_0\|_F\le r\}$ the closed ball in $\rH_n(\F)$ centered at $X_0$ with radius $r$.

Let $\rK=\rH_{n,+}(\F)$-the cone of positive semidefinite matrices in $\rH_n(\F)$.
Recall that $\rK^\vee=\rK$.  A conic programming problem discussed in subsection \ref{subsec:conic} is called a semidefinite programming problem.

A standard semidefinite program is:
\begin{equation}\label{srSDP}
val=\inf\left\{\langle A_0,X\rangle :\,X\in \rH_{n,+}(\F), \langle A_j, X\rangle =b_j, j\in [m]\right\},
\end{equation}
where $A_0,A_j\in \rH_n(\F), b_j\in\R$ for $j\in[m]$.
Denote by $\cF$ the feasible set
\begin{equation}\label{defcF}
\cF=\{X\in \rH_{n,+}(\F):\,\langle A_j, X\rangle =b_j, j\in [m]\}.
\end{equation}
Let
\begin{equation*}
\rL(A_1,\ldots,A_m,b_1,\ldots,b_m)=\{X\in\rH_n(\F):\, \langle A_j, X\rangle =b_j, j\in [m]\}.
\end{equation*}
Then $\rL(A_1,\ldots,A_m,b_1,\ldots,b_m)$ is an affine subspace whose dimension is
$k\in\{-1,0\}\cup[\dim \rH_n(\F)]$.  So $k=-1$ if and only if $\rL(A_1,\ldots,A_m,b_1,\ldots,b_m)=\emptyset$, and $k\ge 0$ if $\rL(A_1,\ldots,A_m,b_1,\ldots,b_m)=X_0+\U$ where $$\U=\rL(A_1,\ldots,A_m,0,\ldots,0)\subset \rH_n(\F)$$ is a subspace of dimension $k$.  

By introducing a standard basis in $\rH_n(\F)$ one can use Gauss elimination to determine
the dimension $d$ of $\rL(A_1,\ldots,A_m,b_1,\ldots,b_m)$.  In particular, if $d=\dim \rH_n(\F)-\delta\ge 0$, then $m\ge \delta$, and there exists a subset $\{b_{i_1},\ldots, b_{i_{\delta}}\}$ for some $\{1\le i_1<\ldots <i_{\delta}\}\subset [m]$ such that
\begin{equation}\label{recondL}
\begin{aligned}
\rL(A_1,\ldots,A_m,b_1,\ldots,b_m)=\rL(A_{i_1},\ldots,A_{i_\delta},b_{i_1},\ldots,b_{i_\delta}),  \\
\dim \rL(A_1,\ldots,A_m,b_1,\ldots,b_m)=d=\dim \rH_n(\F)-\delta\ge 0.
\end{aligned}
\end{equation}

As explained in subsection \ref{subsec:conic} we can assume that a standard SDP problem is of the form \cite[Eq. (1)]{VB96}:
\begin{equation}\label{SDPsf}
\begin{aligned}
val=\inf\left\{\sum_{i=1}^k c_i s_i:\, X_0+\sum_{i=1}^k s_i X_i\succeq 0, \hskip .8in \  \qquad \right. \\
\hskip .8in \  \left. X_0,\ldots,X_k\in\rH_n(\F),(s_1,\ldots,s_k)^\top\in\R^k\right\},
\end{aligned}
\end{equation}
(Here,  $X_0,\ldots,X_k$ are fixed matrices.)
Then the dual problem is of the form \cite[Eq. (27)]{VB96}:
\begin{equation}\label{dSDPsf}
val^\vee=\sup\{-\tr X_0 Z:\, \tr X_i Z=c_i, i\in[k],Z\in \rH_{n,+}(\F)\}.
\end{equation}
Theorem \ref{Slatcon} yields
\begin{corollary}\label{corSlater}  Assume that the feasible set of \eqref{SDPsf} contains a positive definite matrix, and $val$ is finite.  Then the dual problem \eqref{dSDPsf} is feasible, and $val=val^\vee$.
\end{corollary}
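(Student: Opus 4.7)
The plan is to recognize \eqref{SDPsf} as the parametric form \eqref{conprog1} of the generic conic program \eqref{conprog} with cone $\rK = \rH_{n,+}(\F)$ under the inner product $\langle A,B\rangle = \tr(AB)$, and then apply Theorem \ref{Slatcon} directly. The primal variable is $Y = X_0 + \sum_{i=1}^k s_i X_i \in \rH_{n,+}(\F)$, constrained to the affine subspace $Y - X_0 \in V := \spa(X_1,\ldots,X_k)$. Finiteness of $val$ forces $\sum a_i X_i = 0 \Rightarrow \sum a_i c_i = 0$ (else $val = -\infty$ by translating $s$ along the kernel), so there exists $C \in \rH_n(\F)$ with $\langle C, X_i\rangle = c_i$ for $i \in [k]$; the primal objective then becomes $\langle C, Y\rangle - \langle C, X_0\rangle$, and rewriting $Y - X_0 \in V$ as $\langle B_j, Y\rangle = \langle B_j, X_0\rangle$ for a basis $\{B_j\}$ of $V^\perp$ puts the problem in the form \eqref{conprog}.

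The Slater hypothesis of Theorem \ref{Slatcon} requires a feasible interior point of $\rK$. Since $\mathrm{int}\,\rH_{n,+}(\F) = \rH_{n,++}(\F)$, this is exactly the existence of an $s$ with $X_0 + \sum s_i X_i \succ 0$, which is our assumption. Together with finiteness of $val$, the theorem gives feasibility of the dual \eqref{dconprog} and equality of the two conic values.

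It remains to identify the conic dual with \eqref{dSDPsf}. Parametrize the dual variable as $Z = C - \sum_j y_j B_j$; self-duality $\rK^\vee = \rK$ gives $Z \in \rH_{n,+}(\F)$, while $Z - C \in V^\perp$ translates to $\tr(X_i Z) = c_i$ for $i \in [k]$. The conic dual objective $\sum_j y_j \langle B_j, X_0\rangle$ equals $\langle C, X_0\rangle - \tr(X_0 Z)$, so after cancelling the constant shift $\langle C, X_0\rangle$ against the one introduced in the primal, we obtain precisely \eqref{dSDPsf} with $val = val^\vee$. The main delicacy is this bookkeeping of the additive constant and the equivalence between orthogonality to $V^\perp$ and the $k$ scalar equations $\tr(X_i Z) = c_i$; everything else is a direct invocation of Theorem \ref{Slatcon}.
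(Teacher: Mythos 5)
Your proposal is correct and follows exactly the route the paper intends: the paper offers no written proof beyond the words ``Theorem \ref{Slatcon} yields,'' relying on the preceding identification of \eqref{SDPsf} and \eqref{dSDPsf} as a conic program over the self-dual cone $\rH_{n,+}(\F)$ and its dual in the sense of \eqref{conprog}--\eqref{dconprog}. You simply make that invocation explicit, and your extra care (using finiteness of $val$ to get well-definedness of the objective when the $X_i$ are dependent, and the bookkeeping of the additive constant $\langle C, X_0\rangle$) fills in details the paper leaves to the reader.
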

\subsection{Complexity results for semidefinite programming}\label{subsec:comSDP}
We first recall the known complexity results for real SDP.  
 Assume that $A_i,b_i$ for $i\in [m]$ have rational entries.  Then the problem of 
 
 \medskip
  \qquad
(1)  finding if  $\rL(A_1,\ldots,A_m,b_1,\ldots,b_m)=\emptyset$, or  otherwise

 \medskip
 \qquad
(2)  finding the set $\{b_{i_1},\ldots, b_{i_{\delta}}\}$ that satisfies \eqref{recondL}

\medskip
\noindent
has a simple polynomial complexity  \cite[Sec. 4]{AF21}: $O( n^2m^4 H)$.   Here $H$ is the maximal number of bits needed for each entry of $A_j$ and $b_j$.  (We ignored here the logarithmic factors.)

Without loss of generality we can assume that $0\le m\le n(n+1)/2 -1$, and $\dim \rL(A_1,\ldots,A_m,b_1,\ldots,b_m)=n(n+1)/2-m$.

A polynomial-time bit complexity (Turing complexity) of the SDP problem \eqref{srSDP} for $\F=\R$
 using the ellipsoid method was shown by  Gr{\"o}tschel-Lov\'asz-Schrijver, stated implicitly in \cite[Section 6]{GLS81}.   It used the ellipsoid method of Yudin-Nemirovski \cite{YN76}, and  was inspired by the earlier proof of Khachiyan \cite{Kha79} of the polynomial-time solvability of linear programming.
 
 Let $\Z$, $\Q$, $\Z[\bi]=\Z+\bi \Z$,  and $\Q[\bi]$ the the sets of integers,  rationals, Gaussian integers and rationals respectively.  
The explicit statement of Gr{\"o}tschel-Lov\'asz-Schrijve result is given by 
de~Klerk-Vallentin \cite[Theorem 1.1]{deklerk_vallentin}:
\begin{theorem}\label{GLSelm} Consider the SDP problem \eqref{srSDP} for $\F=\R$.
Assume that $A_j\in \rH_n(\R)\cap \Q^{n\times n}, b_j\in \Q$ for $j\in [m]$,  and  
$\dim\rL(A_1,\ldots,A_m,b_1,\ldots,b_m)=n(n+1)/2-m\ge 1$.
Suppose that there exists $Y_0\in\rH_{n,++}(\R)\cap \Q^{n\times n}$ in the feasible set $\cF$ given by \eqref{defcF} for $\F=\R$ and $0<r\le R\in\Q$ such that 
\begin{equation}\label{BrRcond}
\begin{aligned}
\rL(A_1,\ldots,A_m,b_1,\ldots,b_m)\cap \rB(Y_0,r)\subseteq \cF\\
\subseteq \rL(A_1,\ldots,A_m,b_1,\ldots,b_m)\cap \rB(Y_0,R).
\end{aligned}
\end{equation}
Then for $A_0\in \rH_n\cap \Q^{n\times n}$ and a rational $\varepsilon>0$ one can find $X^\star\in \cF\cap\Q^{n\times n}$ in poly-time using the ellipsoid method such that $\langle A_0,X^\star\rangle-\varepsilon\le val$, where the polynomial is in $n,\log R/r, |\log \varepsilon|$ and the bit size of the data $Y_0,A_0,A_1,\ldots,A_m,b_1,\ldots,b_m$.
\end{theorem}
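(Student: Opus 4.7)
The plan is to recast the SDP \eqref{srSDP} as the minimization of a linear functional over a convex body in $\R^k$, where $k=n(n+1)/2-m$ is the dimension of $\rL(A_1,\ldots,A_m,b_1,\ldots,b_m)$, and then apply the rational ellipsoid method of Yudin-Nemirovski in the refined Gr\"otschel-Lov\'asz-Schrijver form. Hypothesis \eqref{BrRcond} supplies precisely the geometric data the method requires: a known interior ball of radius $r$ around the rational point $Y_0$ and a known outer ball of radius $R$.

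First, using the Gauss-elimination preprocessing recalled at the start of subsection \ref{subsec:comSDP}, I produce in poly-time a rational basis $X_1,\ldots,X_k\in \rH_n(\R)\cap\Q^{n\times n}$ of the direction subspace of $\rL$, which I orthonormalize in the Frobenius inner product. Writing each feasible $X$ as $Y_0+\sum_{i=1}^k s_i X_i$ turns \eqref{srSDP} into
\begin{equation*}
val=\inf\left\{\langle A_0,Y_0\rangle+\sum_{i=1}^k s_i \langle A_0,X_i\rangle :\ \bs\in\cF'\right\},
\end{equation*}
where $\cF'=\{\bs\in\R^k : Y_0+\sum_i s_i X_i \succeq 0\}$. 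Orthonormality translates hypothesis \eqref{BrRcond} into $\{\bs:\|\bs\|\le r\}\subseteq \cF'\subseteq\{\bs:\|\bs\|\le R\}$, so $\cF'$ is a rationally described convex body sandwiched between two explicit balls, and the objective is linear in $\bs$.

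The heart of the argument is a polynomial-time rational separation oracle for $\cF'$. Given a rational query $\bs$ from the ellipsoid algorithm, I form $X=Y_0+\sum_i s_i X_i$ and run a symmetric LDL$^\top$-style Gaussian elimination with diagonal pivoting entirely in rational arithmetic: either it certifies $X\succeq 0$ and hence $\bs\in\cF'$, or it halts and outputs a rational vector $\bv$ with $\bv^\top X\bv<0$. In the latter case the affine functional $\bs\mapsto \bv^\top Y_0\bv+\sum_i s_i (\bv^\top X_i\bv)$ is nonnegative on $\cF'$ and strictly negative at the query, yielding a rational separating hyperplane. Feeding this oracle into the ellipsoid method with starting ellipsoid the ball of radius $R$ gives, after $O(k^2\log(R/(r\varepsilon)))$ iterations, a rational $\bs^\star$ whose objective is within $\varepsilon$ of $val$; a final rounding step using the inscribed ball around $Y_0$ to pull the iterate off the boundary yields the desired $X^\star\in \cF\cap \Q^{n\times n}$.

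The main obstacle is bit-size control. Each ellipsoid update involves a square root in principle, so the centers and quadratic forms must be approximated by rationals of polynomial bit length without destroying the separation guarantee; this is the classical Gr\"otschel-Lov\'asz-Schrijver bookkeeping in which each ellipsoid is slightly inflated to absorb the rounding error while preserving the geometric shrinkage of its volume. Combining (i) the polynomial cost per oracle call, (ii) the preserved bit-size bounds on iterates, and (iii) the volume lower bound for $\cF'$ provided by the inscribed ball of radius $r$ then yields the polynomial dependence on $n$, $\log(R/r)$, $|\log\varepsilon|$, and the bit length of $Y_0,A_0,A_1,\ldots,A_m,b_1,\ldots,b_m$ claimed in the theorem.
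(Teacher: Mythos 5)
The paper contains no proof of this statement to compare against: Theorem \ref{GLSelm} is imported verbatim as de~Klerk--Vallentin's explicit formulation \cite[Theorem 1.1]{deklerk_vallentin} of the Gr\"otschel--Lov\'asz--Schrijver ellipsoid result, and the authors simply cite it. Your sketch is, in outline, the standard argument by which that cited theorem is established in the literature: parametrize the affine space $\rL(A_1,\ldots,A_m,b_1,\ldots,b_m)$ rationally, observe that \eqref{BrRcond} supplies the inner and outer balls the ellipsoid method needs, build an exact rational separation oracle for the spectrahedron by symmetric Gaussian elimination (Edmonds-type bounds keep all pivots of polynomial bit length), and feed everything into the GLS weak-optimization machinery. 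As a reduction of the SDP statement to the general GLS theorem for convex bodies given by separation oracles, this is the right route and is sound.

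Two caveats on the details. First, your claim that you can take the basis $X_1,\ldots,X_k$ to be \emph{rational and orthonormal} is false in general: Gram--Schmidt keeps rationality only up to normalization, and the normalizing factors $\|X_i\|_F$ are square roots of rationals. Consequently the asserted exact translation of \eqref{BrRcond} into $\{\bs:\|\bs\|\le r\}\subseteq \cF'\subseteq\{\bs:\|\bs\|\le R\}$ does not hold as stated. The fix is routine: use a rational orthogonal (unnormalized) basis, so the two balls become ellipsoids sandwiched between balls of radii $r/\max_i\|X_i\|_F$ and $R/\min_i\|X_i\|_F$; the distortion factor has polynomially bounded logarithm in the input bit size, so the complexity claim survives, but the step as written needs this repair. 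Second, the genuine heart of the theorem --- the per-iteration rounding of ellipsoid centers and shape matrices to polynomial bit length without destroying the separation guarantee and volume shrinkage --- is invoked as ``classical Gr\"otschel--Lov\'asz--Schrijver bookkeeping'' rather than proved. That is a legitimate appeal to a known result, and it places your proposal at essentially the same level of rigor as the paper itself, which delegates the entire theorem to \cite{deklerk_vallentin}; just be aware that you have reduced the statement to GLS, not given a self-contained proof of it.
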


The complexity of semidefinite programs using the ellipsoid method are discussed in \cite{PK97}.
It was shown by  Karmakar \cite{Kar84} that an IPM is superior to the ellipsoid method for linear programming problems.   
The IPM are discussed in \cite{NN94} and \cite{Ren01}.  In particular, it is shown in  \cite[Theorem 2.4.1]{Ren01} that the number of steps of  the short step primal interior point method is $O\left(\sqrt{\theta(\cF)}\log (\theta(\cF)(R/r)(1/\varepsilon))\right)$.
Here $\theta(\cF)$ is the complexity of the barrier describing $\cF$.

To obtain the Turing complexity results for the above IPM one needs to estimate $\theta(\cF)$, 
and the bit complexity of each short step primal interior point method.  To achieve that one needs to perform Diophantine 
approximations. de~Klerk-Vallentin \cite[Theorem 7.1]{deklerk_vallentin} showed polynomial-time complexity for an SDP problem 
using  IPM method  by applying the short step primal interior point method combined with  Diophantine approximation under 
the condition \eqref{BrRcond}:
\begin{theorem}\label{rdeKVal}(de~Klerk-Vallentin)
Let the assumptions of Theorem \ref{GLSelm} hold.
Then for $A_0\in\rH_n(\R)\cap \Q^{n\times n}$ and rational $\varepsilon>0$ one can find $X^\star\in \cF\cap \Q^{n\times n}$ in poly-time using the short step primal interior point method combined with  Diophantine approximation such that: $\langle C,X^\star\rangle-\varepsilon\le val$, where the polynomial is in $n,\log R/r, |\log \varepsilon|$ and the bit size of the data $Y_0,A_0,A_1,\ldots,A_m,b_1,\ldots,b_m$.
\end{theorem}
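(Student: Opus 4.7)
\textbf{Proof proposal for Theorem \ref{rdeKVal}.} The plan is to combine the short step primal interior point framework described by Renegar with controlled rational arithmetic (Diophantine approximation) at each iteration. First I would rephrase the SDP in the affine-restricted form \eqref{SDPsf}, using the polynomial-time preprocessing from \cite[Sec. 4]{AF21} to select, in case of redundancy, an affinely independent subsystem indexed by \eqref{recondL}. On the cone $\rH_{n,+}(\R)$, the standard logarithmic barrier $F(X)=-\log\det X$ is a $\vartheta$-self-concordant barrier with $\vartheta=n$, and its restriction to the affine subspace $\rL(A_1,\ldots,A_m,b_1,\ldots,b_m)$ is self-concordant with the same complexity on the relative interior of $\cF$.

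Next I would initialize the short step method at the Slater point $Y_0\in\rH_{n,++}(\R)\cap\Q^{n\times n}$. By \cite[Theorem 2.4.1]{Ren01}, starting from an approximate analytic center, path-following produces iterates that track the central path, and after
\[
O\!\left(\sqrt{\vartheta}\,\log\!\bigl(\vartheta\,(R/r)(1/\varepsilon)\bigr)\right)
\]
Newton steps yields an $\varepsilon$-optimal primal iterate. Substituting $\vartheta=n$ gives the target iteration count $O(\sqrt{n}\,\log(n(R/r)/\varepsilon))$. The sandwich condition \eqref{BrRcond} with rational radii $r\le R$ provides the geometric data needed to bound the $\log(R/r)$ factor, and also supplies the warm start required by the short step scheme.

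The main technical work is the bit complexity of each short step. In exact arithmetic a Newton step requires solving a linear system whose matrix (the Hessian of $F$ restricted to the affine slice) has rational entries whose bit size could in principle double each iteration, destroying polynomiality. To control this, I would follow de~Klerk--Vallentin and interleave every Newton step with a Diophantine approximation: compute the update to sufficient accuracy, then round the candidate iterate entrywise to a rational whose denominator is bounded polynomially in the current duality gap, and verify that the rounded matrix is still positive definite, still lies in the affine slice (after a tiny back-projection along the null space of $L$), and still sits inside the short-step Dikin neighborhood around the central path. The quantitative heart of the argument is an estimate showing that the self-concordance proximity measure tolerates a rounding error whose magnitude shrinks only as a fixed polynomial in the target accuracy, so that the rounded sequence still converges at the nominal short-step rate.

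Having produced after the required number of iterations a rational iterate $X^\star\in\cF\cap\Q^{n\times n}$ with $\langle A_0,X^\star\rangle\le val+\varepsilon$, a final bookkeeping step shows that each rounding denominator contributes only polynomial bit overhead, so that the total bit complexity is polynomial in $n$, $\log(R/r)$, $|\log\varepsilon|$, and the bit size of $Y_0,A_0,A_1,\ldots,A_m,b_1,\ldots,b_m$. The place where I expect to have to grind is making the rounding tolerance and the affine back-projection quantitative enough that each short step certifiably remains in the self-concordance neighborhood of the central path; everything else is an assembly of the cited iteration bound \cite[Theorem 2.4.1]{Ren01} with standard properties of the log-det barrier.
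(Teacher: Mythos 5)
You should first be aware that the paper does not prove this statement at all: it is quoted as an external result, namely \cite[Theorem 7.1]{deklerk_vallentin}, and the sentence immediately preceding it in the paper is exactly the attribution (de~Klerk--Vallentin proved polynomial-time complexity ``by applying the short step primal interior point method combined with Diophantine approximation under the condition \eqref{BrRcond}''). So there is no internal proof to compare against; the only fair comparison is with the strategy of the cited source, and your outline does follow that strategy: the self-concordant barrier $-\log\det X$ with complexity parameter $n$, Renegar's iteration bound \cite[Theorem 2.4.1]{Ren01} giving $O\bigl(\sqrt{n}\,\log(n(R/r)/\varepsilon)\bigr)$ steps, and interleaved rational rounding to keep bit sizes under control.

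However, taken as a blind proof attempt, your proposal has a genuine gap, and it sits exactly where you say you ``expect to have to grind.'' The entire mathematical content of the de~Klerk--Vallentin theorem is the quantitative statement you defer: that one can round each iterate to a rational point whose denominator is polynomially bounded, and that this perturbation simultaneously (i) keeps the iterate in the affine slice, (ii) keeps it inside the short-step proximity neighborhood of the central path, and (iii) does not degrade the nominal convergence rate, with the bit size of \emph{every} iterate bounded by a fixed polynomial in $n$, $\log(R/r)$, $|\log\varepsilon|$ and the bit size of the data. Asserting that such a tolerance analysis exists is essentially assuming the theorem; nothing in your sketch establishes it beyond an appeal to the very result being proved. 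A second, smaller gap: the short-step method cannot simply be ``initialized at the Slater point $Y_0$'' --- it requires a starting point near the central path (an approximate analytic center), and producing one from $Y_0$ in polynomial bit complexity needs a separate phase-one/initialization argument, which the sandwich condition \eqref{BrRcond} alone does not supply. If your aim were to make this part of the paper self-contained, both items would have to be filled in; as the paper stands, the intended ``proof'' is simply the citation, and your text should be read as a plan for reproving the cited result rather than as a proof.
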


We next observe that Theorem \ref{rdeKVal} extends to Hermitian matrices.   Let $Z\in \C^{n\times n}$ and veiw $Z$ as a linear operator on $\C^n$: $\z\mapsto Z\z$.  Write $Z=X+\bi Y, \z=\x+\bi \y$, where $X,Y\in \R^{n\times n}, \x,\y\in \R^n$.  Then the linear operator represented by $Z$ corresponds to linear operator on $\R^n\oplus\R^n$ represented by
$\hat Z=\begin{bmatrix} X&-Y\\Y&X\end{bmatrix}$ acting on $\hat\z=\begin{bmatrix}\x\\\y\end{bmatrix}$.
Observe next that $Z\in \rH_n$ if and only if $X\in \rH_n(\R)$ and $Y\in \rA_n=\{A\in\R^{n\times n}, A^\top =-A$\}.  That is $\hat Z\in\rH_{2n}(\R)$.  Assume $Z\in\rH_n$. Observe that $\z^* Z\z=(\hat \z)^\top \hat Z \hat \z$.   Thus 
$$Z\in \rH_{n,+}(\rH_{n,++})\iff \hat Z\in \rH_{2n,+}(\R)(\rH_{2n,++}(\R)).$$
Finally, for  $A,Z\in\rH_n, $ we have the equality: $\tr A Z=\frac{1}{2}\tr \hat A \hat Z$.

It is worthwhile to mention a recent preprint \cite{Wan23} that gives a more efficient reformulation of a complex SDP as a real SDP.

\section{The dual norm of $r(\cdot)$}\label{sec:dualr}
The following lemma is well known to the experts, and we bring its proof for completeness.
We will let $\sigma_1(A) \ge \cdots \ge \sigma_n(A)$ be the singular values of $A \in \C^{n\times n}$,
and use the fact 
that the dual norm  $\|\cdot\|^\vee$ of the spectral norm, known as the  nuclear norm, is equal to 
$\|A\|^\vee=\sum_{i=1}^n \sigma_i(A)$ \cite[Corollary 7.5.13]{Fri15}.   
\begin{lemma}\label{lcharvee}
Let $C\in\C^{n\times n}$.  Then
\begin{enumerate} [{\rm (a)}]
\item
\begin{equation}\label{trcharr}
\begin{aligned}
r(C) & =\max\{\Re \,e^{-\bi\theta}\x^*C\x:\,\|\x\|=1,\theta\in[0,2\pi)\}\\
& = \max\{\Re \tr C(e^{-\bi\theta}\x\x^*):\,\|\x\|=1,\theta\in[0,2\pi)\}.
\end{aligned}
\end{equation}
\item
The extreme points of the unit ball of the norm $r^\vee(\cdot)$ is the set
\begin{equation}\label{defcE}
\cE =  \{e^{-\bi\theta} \bv\bv^*:\, \theta \in [0, 2\pi), \bv \in \C^n, \|\bv\| = 1\}.
\end{equation} 
\item
\begin{equation}\label{rCchar}
r^\vee(C)=\min\left\{\sum_{i=1}^N \|\x_i\|^2:\, C=\sum_{i=1}^N e^{-\bi\theta_i}\x_i\x_i^*, \x_i\in\C^n, \theta_i\in[0,2\pi)
\right\},
\end{equation}
where $N\le 2n^2+1$. 
\item
\begin{equation}\label{rCnrmin}
\|C\|/2\le r(C)\le \|C\|, \quad
\|C\|^\vee=\le r^\vee(C)\le 2\|C\|^\vee.
\end{equation}
\end{enumerate}
\end{lemma}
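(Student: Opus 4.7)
My plan is to dispatch (a) by a one-line identity, then use (a) together with bipolar/gauge--support duality to derive (c), then use (c) to characterize the unit ball of $r^\vee(\cdot)$ and prove (b) by a direct extremality argument, and finally obtain (d) from a polarization identity and order-reversal under dualization. For (a) itself, I simply apply $|z| = \max_{\theta \in [0, 2\pi)}\Re(e^{-\bi\theta}z)$ to $z = \x^*C\x$ and use $\x^*C\x = \tr(C\x\x^*)$.

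For (c), the first equality in (a) rewrites as $r(C) = \max_{F \in \cE}\Re\tr F^*C$, exhibiting $r(\cdot)$ as the support function of the compact complex-balanced set $\cE$ (note $e^{\bi\phi}\cE = \cE$, so in particular $-F\in\cE$ whenever $F\in\cE$). Viewing $\C^{n\times n}$ as $\R^{2n^2}$ with real inner product $\langle A,B\rangle = \Re\tr A^*B$, the bipolar theorem then identifies the unit ball of $r^\vee(\cdot)$ with the compact convex set $\operatorname{conv}(\cE)$, so $r^\vee(C)$ is its Minkowski gauge:
\[
r^\vee(C) = \inf\Bigl\{ \sum_i t_i \,:\, C = \sum_i t_i F_i,\; t_i \ge 0,\; F_i \in \cE \Bigr\}.
\]
Writing $F_i = e^{-\bi\theta_i}\bv_i\bv_i^*$ with $\|\bv_i\|=1$ and setting $\x_i = \sqrt{t_i}\,\bv_i$ gives $t_i = \|\x_i\|^2$ and yields \eqref{rCchar}. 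The bound $N \le 2n^2 + 1$ is Caratheodory's theorem in the ambient $2n^2$-dimensional real space, and compactness of the truncated parameter set promotes the infimum to a minimum.

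For (b), the identification of the unit ball of $r^\vee(\cdot)$ with $\operatorname{conv}(\cE)$ from (c) plus Milman's partial converse of Krein--Milman shows that every extreme point lies in $\cE$. The main obstacle is the converse: each $F_0 = e^{-\bi\theta_0}\bv_0\bv_0^* \in \cE$ is itself extreme. Suppose $F_0 = \tfrac12(F_1+F_2)$ with $r^\vee(F_i)\le 1$; applying (c) to each $F_i$ and pooling the atomic pieces yields $F_0 = \sum_\ell r_\ell e^{-\bi\phi_\ell}\w_\ell\w_\ell^*$ with $\|\w_\ell\|=1$, $r_\ell\ge 0$, $\sum_\ell r_\ell\le 1$. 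Taking the trace gives $e^{-\bi\theta_0} = \sum_\ell r_\ell e^{-\bi\phi_\ell}$, and the saturated chain $1 = |e^{-\bi\theta_0}| \le \sum_\ell r_\ell \le 1$ forces both $\sum_\ell r_\ell = 1$ and, by equality in the triangle inequality, $e^{-\bi\phi_\ell}=e^{-\bi\theta_0}$ whenever $r_\ell>0$. Hence $\bv_0\bv_0^* = \sum_\ell r_\ell \w_\ell\w_\ell^*$; picking any $\y\perp\bv_0$ gives $0 = \sum_\ell r_\ell|\y^*\w_\ell|^2$, so $\w_\ell\in\C\bv_0$ and $\w_\ell\w_\ell^* = \bv_0\bv_0^*$ wherever $r_\ell>0$, and back-substitution through the pooling yields $F_1 = F_2 = F_0$. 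The delicate step is this trace/triangle argument that uncouples the phases from the unit vectors, reducing the problem to the classical extremality of pure states among density matrices.

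For (d), the bound $r(C)\le\|C\|$ is immediate from $|\x^*C\x|\le\|C\|\|\x\|^2$, while $\|C\|\le 2r(C)$ follows from the complex polarization identity $4\x^*C\y = \sum_{k=0}^3 \bi^{-k}(\x+\bi^k\y)^*C(\x+\bi^k\y)$ combined with $\sum_{k=0}^3 \|\x+\bi^k\y\|^2 = 4(\|\x\|^2+\|\y\|^2)$, which gives $|\x^*C\y|\le 2r(C)$ for unit $\x,\y$. The inequalities $\|C\|^\vee \le r^\vee(C) \le 2\|C\|^\vee$ then follow by duality, since dualization reverses the pointwise ordering of norms.
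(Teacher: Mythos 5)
Your proposal is correct, and its overall skeleton (support function of $\cE$, duality/bipolar, Caratheodory) matches the paper's, but two of your arguments are genuinely different. For the hard direction of (b) — that every $e^{-\bi\theta}\bv\bv^*$ is in fact extreme — the paper uses a one-line strict-convexity trick: every element of $\cE$ has Frobenius norm $1$, the unit ball of $r^\vee(\cdot)$ is contained in the Frobenius unit ball, and points of a convex set that are extreme in a larger convex superset are extreme in the set itself; your proof instead decomposes the two averaged matrices via (c), takes traces, and exploits the equality case of the triangle inequality to lock the phases and reduce to extremality of pure states among density matrices. Your route is longer but fully hands-on and makes transparent exactly where extremality comes from; the paper's is shorter but leans on the (unstated) inclusion of the $r^\vee$-ball in the Frobenius ball. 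For (d), the paper simply cites Horn--Johnson for $\|C\|/2\le r(C)\le\|C\|$ and dualizes, whereas you give a self-contained proof of $\|C\|\le 2r(C)$ via the complex polarization identity — a genuine addition, and your identity and the bound $|\x^*C\y|\le 2r(C)$ check out. Your ordering (proving (c) before (b)) also differs from the paper, which proves (b) first and then gets (c) from it; both orders are internally consistent, since you get the convex-hull description of the $r^\vee$-unit ball directly from (a) by the bipolar theorem, which is legitimate because $\cE$ is compact and circled so $\mathrm{conv}(\cE)$ is closed and contains $0$. One cosmetic point in your favor: your Caratheodory count uses the correct real dimension $2n^2$ of $\C^{n\times n}$, giving $N\le 2n^2+1$ as in the statement, whereas the paper's proof text says $n^2$ and $N\in[n^2+1]$, an apparent typo. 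The only place you should add a line is the very end of (b): from the phase alignment and $\w_\ell\w_\ell^*=\bv_0\bv_0^*$ one gets $F_i=s_iF_0$ with $s_i\le 1$ and $s_1+s_2=2$, forcing $s_1=s_2=1$; this is the "back-substitution" you allude to, and it deserves to be written out.
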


\begin{proof} 
{\rm (a)} Assume that $z\in\W(C)$.  Then $z=\x C\x^*$ for some $\x\in\C^n, \|\x\|=1$.  Hence,  
$$|z|=\max\{ \Re \,e^{-\bi\theta}\x^*C\x:\,\theta\in[0,2\pi)\}.$$
This proves \eqref{trcharr}.

\noindent (b)  
Let $\cE$ be the set of the extreme points of the unit ball of $r^\vee(\cdot)$
Clearly, the first characterization of $r(C)$ in \eqref{charrvee} can be replaced by $r(C)=\sup_{F\in\cE}\Re\, F^*C$.  
The second part of the characterization  \eqref{trcharr} yields that 
$$\cE\subseteq  \{e^{-\bi\theta} \bv\bv^*:\, \theta \in [0, 2\pi), \bv \in \C^n, \|\bv\| = 1\}.$$
Observe next that $\|e^{-\bi\theta} \bv\bv^*\|_F=1$.   Recall that the extreme points of the Euclidean norm $\{G\in\C^{n\times n}:\, \|G\|_F\le 1\}$ is the unit sphere $\{G\in\C^{n\times n}:\, \|G\|_F=1\}$.  Hence, equality \eqref{defcE} holds.

\noindent
(c) Clearly,  \eqref{rCchar} holds for $C=0$, where $N=1$.  Assume that $C\ne 0$.
By considering $C_1=\frac{1}{r^\vee(C)}C$ it is enough to consider the case $r^\vee(C)=1$.    Observe that $r^\vee(e^{-\bi\theta}\x\x^*)=1$ if $\|\x\|=1$.  Use the homogeneity of $\|\x\|^2$ to deduce that $r^\vee(e^{-\bi\theta}\x\x^*)=\|\x\|^2$.  Assume that $C=\sum_{i=1}^N e^{-\bi\theta_i}\x_i\x_i^*$.  As $r^\vee(\cdot)$ is a norm we deduce
\begin{equation*}
r^\vee(C)=r^\vee(\sum_{i=1}^N e^{-\bi\theta_i}\x_i\x_i^*)\le \sum_{i=1}^n r^\vee(e^{-\bi\theta_i}\x_i\x_i^*)=\sum_{i=1}^n \|\x_i\|^2.
\end{equation*}

As the real dimension on $\C^{n\times n}$ is $n^2$, Caratheodory's theorem yields that there exists $N\in[n^2+1]$ such that
\begin{equation*}
\begin{aligned}
C=\sum_{i=1}^N c_ie^{-\bi\theta} \bv_i\bv_i^*, \, \|\bv_i\|=1, c_i>0, i\in[N], \sum_{i=1}^N c_i=1\Rightarrow\\
1=r^\vee(C)\le \sum_{i=1}^N c_ir^\vee(e^{-\bi\theta} \bv_i\bv_i^*)=\sum_{i=1}^N c_i=1\Rightarrow\\
C=\sum_{i=1}^N e^{-\bi\theta} (\sqrt{c_i}\bu_i)(\sqrt{c_i}\bu_i)^*,  \quad r(C)=\sum_{i=1}^n \|\sqrt{c_i}\bu_i\|^2.
\end{aligned}
\end{equation*}
This shows \eqref{rCchar}.

\noindent
(d) See \cite[(g),  page 44]{HJ91} for the first inequality of \eqref{rCnrmin}.   
Use the characterization of the dual norms $\|\cdot\|^\vee$ and $r^\vee(\cdot)$ (as in \eqref{charrvee}) and the the first set of inequalities of \eqref{rCnrmin} to deduce the second set of inequalities of \eqref{rCnrmin}.
\end{proof}

The following result was obtained in \cite{Mat93} (Theorem 4.2 and Corollary 4.3)
 using a general cone theoretic argument. We give a direct proof of it.
\begin{theorem}\label{charrvub}
Let $Y \in \C^{n\times n}$.   The following conditions are equivalent:
\begin{enumerate} [(a)]
\item
There is $X \in \rH_{n,+}$ with $\tr X = 1$ such that
$\begin{bmatrix} X & Y \cr Y^* & X \cr\end{bmatrix}$ is positive semidefinite. 
\item The inequiality $r^\vee(Y) \le 1$ holds.
\end{enumerate}
\end{theorem}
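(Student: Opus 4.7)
The plan is to establish each implication separately. For (b) $\Rightarrow$ (a) I would use the rank-one representation of the unit ball of $r^\vee(\cdot)$ supplied by Lemma~\ref{lcharvee}(c); for (a) $\Rightarrow$ (b) I would pair the block matrix from (a) against the SDP certificate of $r(\cdot) \le 1$ given by \eqref{SDPcharC}, exploiting that $\tr(MN) \ge 0$ whenever $M$ and $N$ are positive semidefinite Hermitian matrices of the same size.

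For (b) $\Rightarrow$ (a), the representation \eqref{rCchar} gives $\theta_i \in [0,2\pi)$ and $\x_i \in \C^n$ with $Y = \sum_{i=1}^N e^{-\bi\theta_i}\x_i\x_i^*$ and $\sum_{i=1}^N \|\x_i\|^2 = r^\vee(Y) \le 1$. Setting $X_0 := \sum_{i=1}^N \x_i\x_i^*$ yields $X_0 \in \rH_{n,+}$ with $\tr X_0 = r^\vee(Y)$. The identity
\begin{equation*}
\begin{bmatrix} X_0 & Y \\ Y^* & X_0 \end{bmatrix} = \sum_{i=1}^N \begin{bmatrix} \x_i \\ e^{\bi\theta_i}\x_i \end{bmatrix}\begin{bmatrix} \x_i^* & e^{-\bi\theta_i}\x_i^* \end{bmatrix}
\end{equation*}
displays the left-hand side as a sum of rank-one positive semidefinite matrices, hence it is positive semidefinite. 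To upgrade $\tr X_0 \le 1$ to an equality, replace $X_0$ by $X := X_0 + \tfrac{1 - r^\vee(Y)}{n}\,I_n$; this adds the same PSD scalar matrix to both diagonal blocks and so preserves the block-PSD property while tuning the trace to exactly $1$.

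For (a) $\Rightarrow$ (b), fix any $F \in \C^{n\times n}$ with $r(F) \le 1$. Since $r(\cdot)$ is a norm, also $r(-F) \le 1$, so by \eqref{SDPcharC} with $c=1$ there exists $Z \in \rH_n$ such that
$M := \begin{bmatrix} I_n + Z & -F \\ -F^* & I_n - Z \end{bmatrix} \succeq 0.$
With $N := \begin{bmatrix} X & Y \\ Y^* & X \end{bmatrix} \succeq 0$, the inequality $\tr(MN) \ge 0$ --- after a direct block-expansion, in which the $\pm Z$ terms cancel and $\tr(FY^*) + \tr(F^*Y) = 2\Re \tr(F^*Y)$ because $\tr(FY^*) = \overline{\tr(F^*Y)}$ --- collapses to $2\tr X - 2\Re \tr(F^*Y) \ge 0$, i.e., $\Re \tr(F^*Y) \le \tr X = 1$. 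Taking the supremum over such $F$ and invoking the second identity in \eqref{charrvee} gives $r^\vee(Y) \le 1$.

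I expect the main obstacle to be the constructive direction (b) $\Rightarrow$ (a), where one must exhibit a concrete witness $X$ rather than merely certify feasibility; Lemma~\ref{lcharvee}(c) supplies exactly the rank-one decomposition needed, and without it one would have to invoke SDP strong duality via Corollary~\ref{corSlater} to extract $X$ from the dual problem, a noticeably more involved route. The reverse direction is then a two-line Frobenius-inner-product calculation.
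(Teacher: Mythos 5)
Your direction (b) $\Rightarrow$ (a) is correct and is essentially the paper's own argument: the paper writes $Y$ as a convex combination of the extreme points $e^{\bi\theta_j}\bv_j\bv_j^*$ from Lemma \ref{lcharvee}(b) and takes $X=\sum_j p_j\bv_j\bv_j^*$, which is the same construction as your $X_0=\sum_i\x_i\x_i^*$ built from \eqref{rCchar}; your trace correction by $\frac{1-r^\vee(Y)}{n}I_n$ is a harmless extra step (the paper's decomposition has $\tr X=1$ automatically).

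Your direction (a) $\Rightarrow$ (b), however, has a genuine gap: it is circular within this paper. The pairing argument needs, for \emph{every} $F$ with $r(F)\le 1$, the existence of a certificate $Z\in\rH_n$ with $\begin{bmatrix} I_n+Z & -F\\ -F^* & I_n-Z\end{bmatrix}\succeq 0$. That is exactly the nontrivial direction of \eqref{SDPcharC}; the easy direction (the vector computation in the proof of Theorem \ref{SDPrC}) only shows $c\ge r(C)$ for feasible pairs $(c,Z)$ and cannot produce the certificate. But in this paper \eqref{SDPcharC} is established in Theorem \ref{SDPrC}, and precisely the direction you need is proved there by SDP duality: the dual value is identified with $r(C)$ by invoking Theorem \ref{unbrvee} and the present Theorem \ref{charrvub}, after which Corollary \ref{corSlater} closes the duality gap. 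So your proof of Theorem \ref{charrvub} assumes a result whose proof in the paper rests on Theorem \ref{charrvub}. (You could escape the circle by quoting Ando's lemma \cite{And73} for \eqref{SDPcharC} as an external fact, but that defeats the paper's stated aim of self-contained, independent proofs.) The paper avoids this entirely with a direct structural argument for (a) $\Rightarrow$ (b): after a unitary change of variables reduce to $X=D$ diagonal with positive entries (zero diagonal entries force the corresponding rows and columns of $Y$ to vanish), note that $C=D^{-1/2}YD^{-1/2}$ is a contraction since $\begin{bmatrix} I & C\\ C^* & I\end{bmatrix}\succeq 0$, split $C=(C_1+C_2)/2$ with $C_1,C_2$ unitary via the singular value decomposition ($c_j\mapsto c_j\pm\bi\sqrt{1-c_j^2}$), and spectrally decompose each unitary to write $Y_j=D^{1/2}C_jD^{1/2}=\sum_k e^{\bi\theta_k}(D^{1/2}\bq_k)(D^{1/2}\bq_k)^*$ with $\sum_k\|D^{1/2}\bq_k\|^2=\tr D=1$; hence each $Y_j$, and therefore $Y$, lies in the convex hull of $\cE$, giving $r^\vee(Y)\le 1$ with no appeal to \eqref{SDPcharC}.
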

\begin{proof} $(b)\Rightarrow (a)$.
As the extreme points of the unit ball of the norm $r^\vee(\cdot)$ is the set \eqref{defcE}, it follows that $r^\vee(Y)\le 1$ if and only if $Y$ is in the convex hull of $\cE$. 
Assume that $Y = \sum_{j=1}^r p_j e^{\bi\theta_j} \bv_j\bv_j^*$, where $\|\bv_j\|=1, \theta_j\in\R, j\in[r]$ and $(p_1, \dots, p_r)^\top$ is a probability vector. Then for $X=\sum_{j=1}^r p_j\bv_j\bv_j^*$ the matrix
$$\begin{bmatrix} X & Y \cr Y^* & X \cr\end{bmatrix}
= \sum_{j=1}^r p_j\begin{bmatrix} \bv_j\bv^*_j 
& e^{\bi\theta_j} \bv_j\bv_j^* \cr e^{-\bi\theta_j} \bv_j\bv_j^* & \bv_j \bv_j^*\end{bmatrix}$$ is positive semidefinite, and $\tr X=1$.  

$(a)\Rightarrow (b)$.  Suppose $X\in \C^{n\times n}$ has trace 1 and 
$T = \begin{bmatrix} X  & Y \cr Y^* & X \cr \end{bmatrix}$ is positive semidefinite.
It is enough to consider the case where $Y\ne 0$.  Hence $0\ne X\succeq 0$.
Assume that $X = U^*DU$ such that $U$ is unitary  and
$D = \diag(d_1, \dots, d_n)$ with $d_1 \ge \cdots \ge d_n \ge 0$. 
By replacing $(X,Y)$ with $(U^*XU, U^*YU)$ we can assume that $X=D$.
Note that if $d_n=\cdots =d_{k+1}=0<d_k$,  the assumption that $T\succeq 0$ yields that the last $n-k$ rows and columns of $Y$ are zero.  Hence, to prove that $Y\in$conv$\,\cE$ it is enough to assume that $d_n>0$ and $\sum_{i=1}^n d_i=1$.

Let 
$$T_1=\begin{bmatrix}D^{-1/2}&0\\0&D^{-1/2}\end{bmatrix}\begin{bmatrix}D&Y
\\Y^*&D\end{bmatrix}\begin{bmatrix}D^{-1/2}&0\\0&D^{-1/2}\end{bmatrix}=\begin{bmatrix}I&C\\C^*&I\end{bmatrix} \succeq 0.$$
As $T_1$ is positive semidefinite, $C=D^{-1/2}YD^{-1/2}$ is a contraction. 
Suppose $C$ has singular value decomposition 
$\sum_{j=1}^n c_j \bu_j\bv_j^*$ with $1 \ge c_1 \ge \cdots \ge  c_n \ge 0$
and orthonormal sets $\{\bu_1, \dots, \bu_n\}, \{\bv_1, \dots, \bv_n\} \subseteq \C^n$.
Let $s_j = \sqrt{1-c_j^2}$ for $j = 1,\dots, n$, 
$C_1 = \sum_{j=1}^n (c_j + \bi s_j)\bu_j\bv_j^*$ and $C_2 = \sum_{j=1}^n (c_j - \bi s_j) \bu_j\bv_j^*.$ 
Then $C = (C_1 + C_2)/2$.
We will show that $Y_j$ is in conv$\,\cE$, the  convex hull 
of $\cE$ for $j = 1,2$. Then $Y \in$conv$\,\cE$.

Note that all singular values of $C_j$ are equal 1 so that $C_j$ is unitary.   It is enough to show that $C_1$ in conv$\,\cE$. There is a unitary 
$Q_1$ such that $C_1 = Q_1 D_1 Q_1^*$ with $D_1 = \diag(e^{\bi \theta_1}, \dots, e^{\bi\theta_n})$.   
Observe the equality
$$C_1=\sum_{j=1}^n e^{\bi\theta_j}\bq_j\bq_j^*,  \quad Q_1=[\bq_1,\ldots,\bq_n].$$
Then 
$$Y_1 = D^{1/2}Q_1D_1 Q_1^*D^{1/2}= \sum_{j=1}^n e^{\bi\theta_j}(D^{1/2}\bq_j)(D^{1/2}\bq_j)^*.$$
It is left to show that 
$\sum_{j=1}^n \|D^{1/2}\bq_j\|^2=1$, which holds as
$$\sum_{j=1}^n \tr D^{1/2}\bq_j \bq_j^* D^{1/2} =\sum_{j=1}^n \tr \bq_j  \bq_j^*D=\tr(\sum_{j=1}^n\bq_j \bq^*_j)D=\tr D=1.$$ 
\end{proof}
\begin{theorem}\label{unbrvee} Let $C\in\C^{n\times n}$.   Then \eqref{rveeYchar} holds.
\end{theorem}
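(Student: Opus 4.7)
The plan is to reduce Theorem~\ref{unbrvee} to Theorem~\ref{charrvub} by homogenizing the trace-$1$ normalization. First I dispose of the trivial case $C=0$, where $X=0$ is feasible and both sides equal $0$. Henceforth assume $C\ne 0$.

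Before applying Theorem~\ref{charrvub}, I verify that the minimum is attained. The feasible set $\cF=\{X\in\rH_n:\begin{bmatrix}X&C\\C^*&X\end{bmatrix}\succeq 0\}$ is nonempty: for any $t\ge\|C\|>0$ the Schur complement calculation $tI-C^*(tI)^{-1}C=(t^2I-C^*C)/t\succeq 0$ shows $tI\in\cF$. Any $X\in\cF$ is PSD, since it is a principal diagonal block of a PSD matrix, so $\tr X\ge 0$ and the trace controls the Frobenius norm of $X$. Hence the sublevel sets of the trace functional on $\cF$ are compact and the infimum $m:=\inf_{X\in\cF}\tr X$ is attained at some $X^\star\succeq 0$ with $\tr X^\star=m$.

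Next I establish both inequalities by scaling. For $m\le r^\vee(C)$, set $c=r^\vee(C)>0$. The direction $(b)\Rightarrow(a)$ of Theorem~\ref{charrvub} applied to $Y=C/c$ (which satisfies $r^\vee(Y)\le 1$) produces $X_0\in\rH_{n,+}$ with $\tr X_0=1$ and $\begin{bmatrix}X_0&C/c\\C^*/c&X_0\end{bmatrix}\succeq 0$. Scaling by $c$ shows $cX_0\in\cF$ with $\tr(cX_0)=c$, hence $m\le c=r^\vee(C)$. For the reverse inequality, note first that $m>0$: if $m=0$ then $X^\star=0$, whence the PSD block matrix $\begin{bmatrix}0&C\\C^*&0\end{bmatrix}$ has vanishing diagonal, forcing $C=0$, contradicting our standing assumption. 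Now $\tilde X:=X^\star/m$ satisfies $\tr\tilde X=1$ and $\begin{bmatrix}\tilde X&C/m\\C^*/m&\tilde X\end{bmatrix}\succeq 0$, so the direction $(a)\Rightarrow(b)$ of Theorem~\ref{charrvub} gives $r^\vee(C/m)\le 1$, i.e., $r^\vee(C)\le m$.

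The only mildly delicate points are the attainment of the minimum and the strict positivity of $m$ when $C\ne 0$; both are resolved by elementary PSD principal-submatrix considerations. All the substantive work — notably the extreme-point description $\cE$ of the dual unit ball and the rank-one decomposition behind $(a)\Rightarrow(b)$ — is already carried by Theorem~\ref{charrvub}, so Theorem~\ref{unbrvee} follows by homogenization.
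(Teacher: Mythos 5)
Your proof is correct and takes essentially the same route as the paper: both directions follow from Theorem \ref{charrvub} via the same scaling/homogenization, using $(b)\Rightarrow(a)$ on $C/r^\vee(C)$ for the upper bound and $(a)\Rightarrow(b)$ on a rescaled feasible point for the lower bound. The only difference is bookkeeping: you establish attainment of the minimum explicitly via compactness of trace-sublevel sets, whereas the paper gets attainment for free by exhibiting the trace-one feasible point and ruling out any feasible point of trace less than one by contradiction.
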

\begin{proof}
Let $r^\vee(C)=t\ge 0$.  If $C=0$ then  \eqref{rveeYchar}  trivially holds. Assume that $t>0$, and let $C_1=\frac{1}{t}C$.  Note that $r^\vee(C_1)=1$.  Hence, it is enough to show \eqref{rveeYchar} for $C_1$.  Theorem \ref{charrvub} implies that there exists $X \in \rH_{n,+}$ with $\tr X = 1$ such that
$\begin{bmatrix} X & C_1 \cr C_1^* & X \cr\end{bmatrix}\succeq 0$.   Assume to the contrary to  \eqref{rveeYchar} that there exists $X_1\in\rH_{n,+}, \tr X_1<1$ such that  
$\begin{bmatrix} X_1 & C_1 \cr C_1^* & X_1 \cr\end{bmatrix}\succeq 0$.  As $C_1\ne 0$ we deduce that $X_1\ne 0\Rightarrow \tr X_1=t_1\in (0,1)$.   Let $X_2=\frac{1}{t_1} X_1, C_2=\frac{1}{t_1} C_1$.  Then $\begin{bmatrix} X_2 & C_2 \cr C_2^* & X_2 \cr\end{bmatrix}\succeq 0$.  As $\tr X_2=1$, Theorem \eqref{charrvub} yields that $r^\vee(C_2)\le 1\Rightarrow r^\vee(C_1)\le t_1<0$, which contradicts the equality $r^\vee(C_1)=1$.
\end{proof}
\begin{theorem}\label{SDPrC}  Let $C\in\C^{n\times n}$.   Then characterization \eqref{SDPcharC} holds.
\end{theorem}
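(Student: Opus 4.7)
The plan is to prove \eqref{SDPcharC} by setting up the right-hand side as a standard SDP and identifying its dual with the characterization of $r(C)$ from \eqref{charrvee}. The identification will go through Theorem \ref{charrvub}, which applies directly to the dual feasibility constraint.

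For the easy direction (infimum $\ge r(C)$), fix any feasible $(c, Z)\in\R\times\rH_n$, a unit vector $\x\in\C^n$, and a phase $\theta\in[0,2\pi)$, and plug the test vector $\bu=[\x^\top,\,-e^{-\bi\theta}\x^\top]^\top$ into the PSD block constraint. A direct expansion yields
\begin{equation*}
\bu^*\begin{bmatrix} cI_n+Z & C \\ C^* & cI_n-Z \end{bmatrix}\bu \;=\; 2c-2\Re(e^{-\bi\theta}\x^*C\x) \;\ge\; 0.
\end{equation*}
Picking $\theta=\arg(\x^*C\x)$ gives $c\ge|\x^*C\x|$, and maximizing over unit $\x$ yields $c\ge r(C)$. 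In particular the infimum in \eqref{SDPcharC} is finite.

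For the harder direction, I observe that the minimization in \eqref{SDPcharC} fits the form \eqref{SDPsf} with $X_0=\begin{bmatrix}0&C\\ C^*&0\end{bmatrix}$ and the linear family parametrized by $(c,Z)$. Strict feasibility holds: for $c>\|C\|$ and $Z=0$ the Schur complement $cI-C^*(cI)^{-1}C=(1/c)(c^2I-C^*C)$ is positive definite, so the block matrix is positive definite. Hence Corollary \ref{corSlater} gives zero duality gap. To compute the dual \eqref{dSDPsf}, write $W=\begin{bmatrix}W_{11}&W_{12}\\ W_{12}^*&W_{22}\end{bmatrix}\succeq 0$. The equality constraint from the $Z$-block, $\tr(Z(W_{11}-W_{22}))=0$ for all Hermitian $Z$, forces $W_{11}=W_{22}$, while the constraint from the $c$-coefficient gives $\tr W=1$, hence $\tr W_{11}=1/2$. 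The rescaling $X:=2W_{11}$, $Y:=-2W_{12}$ (the sign flip preserves positive semidefiniteness, by conjugating with $\diag(I,-I)$) transforms the dual objective to $-\tr(X_0 W)=\Re\tr(Y^*C)$ and the constraints to
\begin{equation*}
\tr X=1,\qquad X\succeq 0,\qquad \begin{bmatrix}X&Y\\ Y^*&X\end{bmatrix}\succeq 0.
\end{equation*}
By Theorem \ref{charrvub}, feasibility of $Y$ is equivalent to $r^\vee(Y)\le 1$, so the dual value equals $\max\{\Re\tr(Y^*C):r^\vee(Y)\le 1\}=r(C)$ by the third identity in \eqref{charrvee}. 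Zero duality gap then gives $\inf c=r(C)$.

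Attainment of the minimum follows from compactness: for any minimizing sequence $(c_k,Z_k)$, the constraints $-c_kI\preceq Z_k\preceq c_kI$ with $c_k\downarrow r(C)$ bound $\{Z_k\}$ in operator norm, so a convergent subsequence produces a limit $Z^*$ with $(r(C),Z^*)$ feasible. The main obstacle is the dual-arithmetic step: the rescaling $X=2W_{11}$ and sign flip $Y=-2W_{12}$ must be chosen precisely so that the PSD block constraint becomes exactly the one of Theorem \ref{charrvub} and so that the objective becomes $\Re\tr(Y^*C)$, matching the third line of \eqref{charrvee}.
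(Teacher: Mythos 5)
Your proof is correct and follows essentially the same route as the paper's: the same test-vector computation with $\bu=[\x^\top,-e^{-\bi\theta}\x^\top]^\top$ for the lower bound, strict feasibility plus Corollary \ref{corSlater} for zero duality gap, and identification of the dual with $\max\{\Re\tr(Y^*C):r^\vee(Y)\le 1\}=r(C)$ via Theorem \ref{charrvub} and \eqref{charrvee}. Your explicit block-by-block derivation of the dual (including the sign flip via conjugation by $\diag(I_n,-I_n)$) and the compactness argument for attainment of the minimum are details the paper leaves implicit, but they do not change the approach.
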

\begin{proof}  Consider the infimum problem
\begin{equation*}
\mu(C)=\inf\{c:\,\begin{bmatrix} cI_n +Z&C\\C^*& cI_n -Z\end{bmatrix}\succeq 0\}.
\end{equation*}
Let $T(c,Z)=\begin{bmatrix} cI_n +Z&C\\C^*& cI_n -Z\end{bmatrix}$.
Observe that $T(c,Z)\succeq 0$ yields
\begin{equation*}
\begin{aligned}
cI+Z\succeq 0, \quad cI-Z\succeq 0\Rightarrow c\ge 0, Z\in\rH_n,\\
(\x^*, -e^{-\bi\theta}\x^*)T(c,Z)(\x^*, -e^{-\bi\theta}\x^*)^*\ge 0\Rightarrow  2(\x^*\x c-\Re  e^{\bi\theta}\x^* C\x)\ge 0.
\end{aligned}
\end{equation*}
Hence $c\ge r(C)$.   Clearly, for $\varepsilon>0$ the following condition hold:
$$T(\|C\|+\varepsilon,0)=(\|C\|+\varepsilon) I_{2n}+S(C)\succ 0.$$ 
Hence $\mu(C)\le \|C\|$.

Observe that the infimum problem for $\mu(C)$ is a standard SDP problem of the form
\eqref{SDPsf}:
\begin{equation}\label{SDPmin}
\begin{aligned}
\inf\left\{z_0:\, S(C)+ z_0I_{2n}+\sum_{i=1}^n  z_{ii}A_{ii}+ \hskip 1.5in \right. \
\\ \left. \hskip .7in \sum_{1\le i <j\le n} (\Re z_{ij} A_{ij}+\Im z_{ij} B_{ij})\succeq 0, Z=[z_{ij}]\in\rH_n\right\},
\end{aligned}
\end{equation}
where
\begin{equation*}
\begin{aligned}
A_0 &=I_{2n}, \quad A_{ii}=\begin{bmatrix}\be_i\be_i^\top&0\\0&-\be_i\be_i^\top\end{bmatrix}, \quad i\in[n],\\
A_{ij} & =\begin{bmatrix}\be_i\be_j^\top+\be_j\be_i^\top&0\\0&-\be_i\be_j^\top-\be_j\be_i^\top\end{bmatrix}, \quad 1\le i< j\le n,\\
B_{ij} & =\bi\begin{bmatrix}\be_i\be_j^\top-\be_j\be_i^\top&0\\0&-\be_i\be_j^\top+\be_j\be_i^\top\end{bmatrix}, \quad
1\le i< j\le n.
\end{aligned}
\end{equation*}
Recall that the dual problem to \eqref{SDPmin} is \eqref{dSDPsf}:
\begin{equation*}
2\sup\left\{-\Re\tr (CY^*):\,\begin{bmatrix} X& Y\\Y^*& X\end{bmatrix}\succeq 0,   \tr X=1/2\right\},
\end{equation*}
which is equivalent to 
\begin{equation*}\label{SDPmax}
\sup\left\{-\Re\tr (CY^*):\,\begin{bmatrix} X& Y\\Y^*& X\end{bmatrix}\succeq 0,   \tr X=1\right\}.
\end{equation*}
Use Corollary \ref{unbrvee} and the maximum characterization of 
$r^\vee$ in Theorem \ref{charrvub} to deduce that the above supremum is $r(C)$.
As the admissible set for $\mu(C)$ contains a positive definite matrix Corollary \ref{corSlater} yields that $\mu(C)=r(C)$.
\end{proof}

In the finite dimensional case,
Lemma 1 in  \cite{And73} states that $r(C)\le 1$ if and only if there exists $Y\in\rH_n$ that satisfies the condition $\begin{bmatrix}2(I-Y)&C\\C^*&2Y\end{bmatrix}\in\rH_{2n,+}$.  Set $Z=I-2Y$ and deduce this lemma from
Theorem \ref{SDPrC}.

\section{The computation of $r(C)$ and $r^\vee(C)$ is poly-time}\label{sec:comprC}
\subsection{Poly-time complexity of $r(C)$, $r^\vee(C)$, $\|C\|$ and $\|C\|^\vee$ using SDP}\label{subsec:ptrC}
\begin{theorem}\label{ptcrC}  Let $C\in \Q^{n\times n}[\bi]$. and $0<\varepsilon\in\Q$.
Then there exists an $\varepsilon$ approximation of $r(C), r^\vee(C), \|C\|,\|C\|^\vee$ in poly-time in 
$n, |\log \varepsilon|$ and the entries of $C$ using the ellipsoid method or the short step primal interior point method combined with  Diophantine approximation.
\end{theorem}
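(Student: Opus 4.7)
The plan is to reduce each of the four computations to an instance of the standard real SDP \eqref{SDPsf} and then invoke Theorem \ref{rdeKVal} (or Theorem \ref{GLSelm}). First I would apply the Hermitian-to-real embedding $Z \mapsto \hat Z$ recalled at the end of Subsection \ref{subsec:comSDP}; this converts any Hermitian SDP on $\C^{n\times n}$ whose data lie in $\Q^{n\times n}[\bi]$ into a real symmetric SDP on $\R^{2n\times 2n}$ whose data are rational and of essentially the same bit-size. Under this embedding the SDP \eqref{SDPcharC} for $r(C)$ becomes a problem in $k=1+n^2$ real decision variables (the scalar $c$ and the entries of $Z\in\rH_n$), and \eqref{rveeYchar} for $r^\vee(C)$ becomes one in $k=n^2$ variables, both in the form \eqref{SDPsf}.

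Next I would check the quantitative Slater-type hypotheses of Theorem \ref{rdeKVal}. Set $\rho=(n+1)\max_{i,j}|c_{ij}|$, a rational number of bit-size polynomial in that of $C$. For $r(C)$ the pair $(c,Z)=(\rho,0)$ is strictly feasible since $T(\rho,0)=\rho I_{2n}+S(C)$ has smallest eigenvalue $\rho-\|C\|\ge \max_{i,j}|c_{ij}|$. Appending the redundant inequality $c\le 2\rho$ (which still leaves the true optimum $r(C)\le\|C\|\le\rho$ inside the feasible set), the block inequalities $cI_n\pm Z\succeq 0$ force $\|Z\|_{op}\le 2\rho$, hence $\|Z\|_F\le 2\rho\sqrt{n}$, which yields an outer radius $R=O(\rho\sqrt{n})$ around $(\rho,0)$. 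The positive-definiteness gap computed above gives an inner radius $r\ge \tfrac{1}{2}\max_{i,j}|c_{ij}|$, so $\log(R/r)$ is polynomial in the bit-size of $C$. For $r^\vee(C)$ the point $X=\rho I_n$ makes the block matrix in \eqref{rveeYchar} strictly positive definite by diagonal dominance, and the bound $\tr X\le 2r^\vee(C)\le 4\|C\|^\vee\le 4n\|C\|$ (using \eqref{rCnrmin}) again furnishes rational $r,R$ of polynomial bit-size.

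With rational data, rational strictly feasible points, and rational bounds $r,R$ of polynomial bit-size, Theorem \ref{rdeKVal} outputs an $\varepsilon$-optimal rational solution in time polynomial in $n$, $|\log\varepsilon|$, and the bit-size of $C$. For $\|C\|$ and $\|C\|^\vee$ I would apply the well-known SDP representations
\begin{equation*}
\|C\|=\min\left\{c:\ \begin{bmatrix} cI_n & C \\ C^* & cI_n\end{bmatrix}\succeq 0\right\},\qquad \|C\|^\vee=\tfrac{1}{2}\min\left\{\tr X_1+\tr X_2:\ \begin{bmatrix} X_1 & C \\ C^* & X_2\end{bmatrix}\succeq 0\right\},
\end{equation*}
which are of the same form and satisfy the analogous Slater condition, with the same style of $r,R$ bounds obtained from $X_1=X_2=\rho I_n$.

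The main obstacle I expect is the explicit book-keeping of the radii $r$ and $R$: producing rational estimates of polynomial bit-size and justifying that the redundant box constraints used to bound the feasible region do not alter the optimum. The remainder of the argument is an essentially mechanical assembly of the SDP characterizations from Section \ref{sec:dualr}, the complexity theorems of Subsection \ref{subsec:comSDP}, and the real embedding of complex Hermitian SDPs.
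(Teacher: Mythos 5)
Your proposal is correct and follows essentially the same route as the paper: the paper's slack variable $t$ together with the trace normalization $2na+t=3(2n+1)\omega(C)$ is exactly your redundant box constraint $c\le 2\rho$ in disguise (an inequality added to make the feasible set bounded so the outer-ball condition of \eqref{BrRcond} holds), and the paper likewise exhibits a strictly feasible rational point $Y_0$ built from a rounded norm bound $\omega(C)$ with inner/outer radii whose ratio is polynomial in $n$, before invoking Theorems \ref{GLSelm} and \ref{rdeKVal}. One nit: your $\rho=(n+1)\max_{i,j}|c_{ij}|$ need not be rational when $C\in \Q^{n\times n}[\bi]$ (since $|c_{ij}|=\sqrt{a^2+b^2}$), so it should be replaced by a rational upper bound, e.g.\ $(n+1)\max_{i,j}\left(|\Re c_{ij}|+|\Im c_{ij}|\right)$ or an integer rounding, which is precisely why the paper works with an integer $\omega(C)$ satisfying $\|C\|_F+1\le\omega(C)\le\|C\|_F+2$.
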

\begin{proof}  
 We can find  $\omega(C)\in\N$ in polynomial time in $n$ and  the entries of $C$   such that $\|C\|_F+1\le \omega(C)\le \|C\|_F+2$.   We first consider $r(C)$.  Recall that $r(C)\le \|C\|\le \|C\|_F$.   We next consider the following subset of hermitian matrices in $\C^{(2n+1)\times (2n+1)}$:
\begin{equation}\label{defcA}
\begin{aligned}
\cA=\{X=\begin{bmatrix}aI_n +Z&C&0\\C^*&aI_n-Z&0\\0&0&t\end{bmatrix}\in \rH_{2n+1}:\,\\ 2na+t=3(2n+1)\omega(C)\}.
\end{aligned}
\end{equation}
It is straightforward to show that 
\begin{equation*}
\begin{aligned}
\cA=L(A_1,\ldots,A_m,b_1,\ldots,b_m),  m=(2n+1)^2-(n^2+1),  \\
\dim L(A_1,\ldots,A_m,b_1,\ldots,b_m)=n^2+1,
\end{aligned}
\end{equation*}
for some $A_1,b_1,\ldots, A_m,b_m$. (See the proof of Theorem \ref{SDPrC}.)
Let $\cF$ be the admissible set $\cF=\cA\cap \rH_{2n+1,+}$.
Set $F=\frac{1}{2n}\diag(I_{2n},0)$.  Using \eqref{SDPcharC}, one can  show that
\begin{equation*}
r(C)=\min\{\langle F,X\rangle:\,X\in \cF\}.
\end{equation*}
It is left to show that the conditions of Theorems \ref{GLSelm} and  \ref{rdeKVal} are satisfied.
Clearly,  we can assume that $n\ge 2$.

Let 
\begin{equation*}
Y_0=\begin{bmatrix}3\omega(C)I_n&C&0\\C^*&3\omega(C)I_n&0\\0&0&3\omega(C)\end{bmatrix}=\diag(3\omega(C)I_{2n}+S(C),3\omega(C)),
\end{equation*}
where 
\begin{equation}\label{defS(F)} 
S(F)=\begin{bmatrix}0&F\\F^*&0\end{bmatrix}\in \rH_{m+n}\textrm{ for } F\in\C^{m\times n}. 
\end{equation} 
 Let $r=\rank C$.  Then $\rank S(C)=2r$, and the nonzero eigenvalues of $S(C)$ are $\pm $ of the nonzero singular values of $C$ \cite{Fri15}.
Thus 
$$\lambda_{\max}(S(C))=\|C\|\ge \cdots\ge \lambda_{\min}(S(C))=-\|C\|, \quad \|S(C)\|=\|C\|.$$
Hence, $\lambda_{2n+1}(Y_0)> 2\omega(C)$. In particular, $Y_0$ is positive definite, and $Y_0\in\cF$.  We next show that $\cF\supset \cA\cap \rB(Y_0,\omega(C))$.  

Assume that $X\in \cA\cap \rB(Y_0,\omega(C))$.  So $X$ is of the form given by \eqref{defcA}.  Hence 
\begin{equation*}
\begin{aligned}
X-Y_0 & =\diag((a-3\omega(C))I_n+Z,(a-3\omega(C))I_n-Z, t-3\omega(C),  \\
& 2n a+t=3(2n+1)\omega(C), \quad \|X-Y_0\|_F\le \omega(C).
\end{aligned}
\end{equation*}
Recall that for any $G\in\C^{p\times q}$ one has inequality $\|G\|\le \|G\|_F$.
Therefore one has the inequalities  

\medskip\noindent
(1) \ $\omega(C)\ge |t-3\omega(C)|\Rightarrow t\ge 2\omega(C),$

\medskip\noindent
(2) \ $ \omega^2(C)\ge \|(a-3\omega(C))I_n+Z\|_F^2 +\|(a-3\omega(C))I_n-Z\|_F^2 $

\medskip \hskip .6in
$=
2(n(a-3\omega(C))^2+\|Z\|_F^2)$
$$\Rightarrow 
\omega \ge \sqrt{2n}|a-3\omega(C)|\ge 2|a-3\omega(C)|\Rightarrow a\ge \frac{5}{2}\omega(C),$$

\medskip\noindent
(3) \
$
\omega(C)\ge \sqrt{2}\|Z\|_F\ge \sqrt{2}\|Z\|= \sqrt{2}\|\diag(Z,-Z)\|\ge -\sqrt{2}\lambda_{\min}(\diag(Z,-Z)).$

\medskip\noindent
Hence,
\begin{equation*}
\begin{aligned}
\lambda_{\min}(aI_{2n}+\diag(Z,-Z)+S(C))=a+\lambda_{\min}(\diag(Z,-Z)+S(C))\\
\ge a-\frac{1}{\sqrt{2}}\omega(C)-\|C\|> \frac{3}{4}\omega(C)\Rightarrow \lambda_{\min}(X)> \frac{3}{4}\omega(C).
\end{aligned}
\end{equation*}

We claim that $\cF\subset \cA\cap \rB(Y_0, 8n\omega(C))$.  Assume that $X\in\cF$.  So $X\succeq 0$ is of the form given by $\eqref{defcA}$.  Hence $aI_n+Z\succeq 0, aI_n-Z\succeq 0, t\ge 0$.  As $2na +t=3(2n+1)\omega(C)$ we deduce that 
$$0\le t\le 3(2n+1)\omega(C), \quad 0\le a\le \frac{3(2n+1)}{2n}\omega(C)\le \frac{15}{4}\omega(C)<4\omega(C),$$
$$\|Z\|\le a\le 4\omega(C)\Rightarrow \|Z\|_F^2\le 16\omega^2(C)n.$$
Hence,
$$ \|X-Y_0\|_F^2=\|(a-3\omega(C))I_n+Z\|_F^2+\|(a-3\omega(C)I_n-Z\|_F^2 \\+(t-3\omega(C))^2 $$

\medskip\hskip .6in
$=2n(a-3\omega(C))^2+2\|Z\|_F^2+|t-3\omega(C)|^2$

\medskip\hskip .6in
$\le 
(18n+32n+36n^2)\omega^2(C)<64n^2\omega^2(C).$

\medskip\noindent
Observe that $\frac{R}{r}=\frac{8n\omega(C)}{\omega(C)}=8n$.  One can then use Theorems \ref{GLSelm} and  \ref{rdeKVal} to conclude the proof for $r(C)$.

Consider now $r^\vee(C)$.  Let 
\begin{equation*}
\cA=\left\{Z=\begin{bmatrix}X&C&0\\C^*&X&0\\0&0&t\end{bmatrix}\in \rH_{2n+1}:\,
X \in \rH_n, \ \tr Z=(4n+2)\omega(C)\right\}.
\end{equation*}
It is straightforward to show that 
$$\dim \cA = n^2, \quad \cA=\rL(A_1,\ldots,A_{(n+1)(3n+1)},b_1,\ldots,b_{(n+1)(3n+1)}),$$
$$A_j \in \rH_{2n+1}\cap\Q^{(2n+1)\times (2n+1)}[\bi], \quad b_j\in \Q, j\in[(n+1)(3n+1)].$$
Then $\cF=\cA\cap \rH_{2n+1,+}$.  The characterization \eqref{rveeYchar} yields that 
$$r^\vee(C)=\min_{Z\in \cF} \tr \diag(I_{n},0)Z.$$
Let $Z_0=\diag(2\omega(C)I_{2n}+S(C), 2\omega(C))$.  Clearly,  $\lambda_{\min}(Z_0)\ge \omega(C)$.  Hence, $Z_0\in \cF$.  The arguments for the case $r(C)$ yield that 
$\rB(Z_0, \omega(C))\cap \cA\subset \cF$.  Similarly, it follows that $\cF\subset \cA\cap \rB(Z_0,5n\omega(C))$.  Hence $\frac{R}{r}=5n$. 
One can then use Theorems \ref{GLSelm} and  \ref{rdeKVal} to conclude the proof for $r^\vee(C)$.

Consider now $\|C\|$.  Clearly
\begin{equation}\label{SDP||C||}
\|C\|=\min\{a:\,a I_{2n}+S(C)\succeq 0\}.
\end{equation}
Set $$\cC=\{Z=\diag(aI_{2n}+S(C),t):\,2na+t=(4n+2)\omega(C)\}.$$ 
Then $\cF=\cC\cap \rH_{2n+1,+}$.   Let $Z_0=\diag(2\omega(C)I_{2n}+S(C), 2\omega(C))$.  
As in the case of $r^\vee(C)$ it follows that 
$$\rB(Z_0,\omega)\cap \cC\subset \cF\subset \rB(Z_0,5n\omega(C)).$$

Consider now $\|C\|^\vee$.   It was shown in \cite{Mat93} that $\|C\|^\vee\le 1$ if and only if 
there exists a positive semi-definite matrix $Q=\begin{bmatrix}X&C\\C^*&Y\end{bmatrix}$ such that $\tr X=\tr Y=1$. 
We give a short proof of this fact in the following.
Recall that the extreme points of the nuclear norm are $\bu\bv^*, \|\bu\|=\bv\|=1$ \cite[Proof of Theorem 7.5.9]{Fri15}.
If $\|C\|^\vee \le 1$, then $C = \sum_{j=1}^r p_j \bu_j \bv_j^*$ for some positive 
numbers $p_1, \dots, p_r$ summing up to one, and unit vectors  $\bu_1, \dots, \bu_r, \bv_1, \dots, \bv_r$.
Let $X = \sum_{j=1}^r p_j \bu_j \bu_j^*$ and $Y = \sum_{j=1}^r p_j \bv_j \bv_j^*$. Then 
$Q = \begin{bmatrix}X&C\\C^*&Y\end{bmatrix} = \sum_{j=1}^r p_j z_j z_j^*$ is positive semidefinite,
where $z_j = \begin{bmatrix} x_j\cr y_j\cr\end{bmatrix}$ for $j = 1, \dots, r$.
Conversely, suppose that $Q\succeq 0$.  Let $C=\sum_{i=1}^n \sigma_i(C)\bu_i\bv_i^*$ be a singular value decomposition of $C$. Thus $\bu_1,\ldots,\bu_n$ and $\bv_1,\ldots,\bv_n$ are two orthonormal bases in $\C^n$.  Observe that 
$$0\le \sum_{i=1}^n (\bu_i^*, -\bv_i^*) Q(\bu_i^*, -\bv_i^*)^*=2(1-\|C\|_1).$$
Hence, $\|C\|^\vee\le 1$.
Thus, $\|C\|^\vee$ has the following SDP characterization:
\begin{equation}\label{SDPnucnrm}
\|C\|^\vee=\min\{\tr \diag(I_n,0) V:\, Q=\begin{bmatrix}X&C\\C^*&Y\end{bmatrix}\succeq 0,\tr X=\tr Y\}.
\end{equation}
One can use similar arguments  for the case $r^\vee(C)$ to deduce the theorem for $\|C\|^\vee$. 
\end{proof}

A recent paper of T.  Mitchell \cite{Mit23} describes the best known methods for computing $r(C)$.  It also mentions \cite[page A754]{Mit23} that the SDP algorithm suggested in \cite{Mat93} is expensive.   In \cite{MO23} the authors study experimental comparison of methods for computing the numerical radius.  Their results show that the SDPT3 and SeDuMi solvers are much slower than Level Set with Optimization Algorithm 3.1 in \cite{Mit23}.  
\section{Applications}\label{sec:appl}
\subsection{A known characterization of $r(C)$}\label{subsec:knwnr(C)}
The following characterization of $r(C)$ is well known \cite{Kip52}, \cite[page 41]{HJ91}:
\begin{lemma}\label{knwnr(C)}
Let $C=A+\bi B$, where $A,B\in \rH_n$.  Then
\begin{equation*}
r(C)=\max_{\theta\in[0,2\pi)} \lambda_{\max}(\cos\theta A+\sin\theta B).
\end{equation*}
\end{lemma}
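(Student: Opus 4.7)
The plan is to argue directly from the definition of $r(C)$ in \eqref{charrvee} by decomposing $\x^*C\x$ into its real and imaginary parts and then interchanging two maxima.

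First I would write $C = A + \bi B$ with $A, B \in \rH_n$, so that for any unit vector $\x \in \C^n$ the quantities $\x^*A\x$ and $\x^*B\x$ are real, and
\[
\x^*C\x = \x^*A\x + \bi\,\x^*B\x.
\]
Next I would use the standard identity $|z| = \max_{\theta\in[0,2\pi)} \Re(e^{-\bi\theta} z)$ for any $z\in\C$, applied to $z = \x^*C\x$, to obtain
\[
|\x^*C\x| = \max_{\theta\in[0,2\pi)}\bigl(\cos\theta\,\x^*A\x + \sin\theta\,\x^*B\x\bigr) = \max_{\theta\in[0,2\pi)} \x^*(\cos\theta\,A + \sin\theta\,B)\x.
\]

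Then, combining this with the definition $r(C) = \max_{\|\x\|=1}|\x^*C\x|$ from \eqref{charrvee}, I have
\[
r(C) = \max_{\|\x\|=1}\max_{\theta\in[0,2\pi)} \x^*(\cos\theta\,A+\sin\theta\,B)\x.
\]
Since both maxima are taken over compact sets and the expression is jointly continuous in $(\x,\theta)$, the two maxima can be interchanged. For each fixed $\theta$, the matrix $\cos\theta\,A+\sin\theta\,B$ is Hermitian, and the maximum of its quadratic form over unit vectors is exactly $\lambda_{\max}(\cos\theta\,A+\sin\theta\,B)$ by the Rayleigh--Ritz principle. This yields the claimed formula.

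I do not expect any real obstacle here; the only substantive point is the swap of maxima, which is trivially justified by compactness, and the Rayleigh--Ritz identification of the inner maximum with the top eigenvalue of a Hermitian matrix. The whole argument should fit in a few lines.
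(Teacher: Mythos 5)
Your proof is correct. Note that the paper itself gives no proof of this lemma---it is stated as well known, with citations to Kippenhahn and to Horn--Johnson---so your argument simply supplies the standard proof those references contain: the identity $|z|=\max_{\theta}\Re\,(e^{-\bi\theta}z)$ applied to $z=\x^*C\x$ (the same identity the paper uses in part (a) of Lemma \ref{lcharvee}), the interchange of the two maxima (which is valid for iterated suprema in general; compactness is only needed for attainment), and the Rayleigh--Ritz characterization of $\lambda_{\max}$. No gaps.
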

Combine the above characterization with convexity of $\lambda_{\max}(\cdot)$ on $\rH_n$ and the characterization \eqref{SDPcharC} to deduce
\begin{corollary}\label{maxcharpenc} Let $A,B\in \rH_n$.  Then
\begin{equation*}
\max\left\{\lambda_{\max}(xA+yB):\,x^2+y^2\le 1\right\}
=\min\left\{c:\,\begin{bmatrix} cI_n +Z&A+\bi B\\A-\bi B& cI_n -Z\end{bmatrix}\succeq 0\right\}.
\end{equation*}
\end{corollary}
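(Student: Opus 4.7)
The proof combines Lemma \ref{knwnr(C)} with the SDP characterization \eqref{SDPcharC} established in Theorem \ref{SDPrC}, followed by a brief convexity argument to pass from the unit circle to the unit disk. Concretely, set $C := A + \bi B \in \C^{n\times n}$. Then Theorem \ref{SDPrC} applied to $C$ identifies the right-hand side of the claimed identity with $r(A+\bi B)$, while Lemma \ref{knwnr(C)} rewrites this quantity as $\max_{\theta \in [0, 2\pi)} \lambda_{\max}(\cos\theta\, A + \sin\theta\, B)$, i.e., the left-hand side restricted to the boundary circle $x^2 + y^2 = 1$ of the closed unit disk.

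It then suffices to verify that enlarging the feasible set from the unit circle to the closed unit disk does not increase the maximum. The function $\varphi(x,y) := \lambda_{\max}(xA + yB)$ is the composition of the convex, continuous function $\lambda_{\max} \colon \rH_n \to \R$ with the linear map $(x,y) \mapsto xA + yB$, hence $\varphi$ is continuous and convex on $\R^2$. A continuous convex function attains its maximum on a compact convex set at some extreme point of that set; since every point of the unit circle is an extreme point of the closed unit disk, the maximum of $\varphi$ over the disk is attained on the circle, and the two maxima agree.

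The only step requiring any thought is this last passage from disk to circle, which is the standard extreme-point observation above (equivalently, one may invoke positive homogeneity of $\lambda_{\max}$: any candidate maximizer $(x_0,y_0)$ in the open disk with $\varphi(x_0,y_0) > 0$ can be rescaled to the boundary to strictly increase $\varphi$, while the case $\max \le 0$ is handled by the trivial inequality $\varphi(\cos\theta,\sin\theta) + \varphi(-\cos\theta,-\sin\theta) \ge 2\varphi(0,0) = 0$). Everything else is a direct invocation of the cited results, and no new SDP manipulation is required beyond Theorem \ref{SDPrC}.
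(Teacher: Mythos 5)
Your proof is correct and follows exactly the route the paper intends: the paper's one-line proof (``combine Lemma~\ref{knwnr(C)} with convexity of $\lambda_{\max}(\cdot)$ and the characterization \eqref{SDPcharC}'') is precisely your argument, with the right-hand side identified as $r(A+\bi B)$ via Theorem~\ref{SDPrC} (noting $(A+\bi B)^* = A - \bi B$), the left-hand side restricted to the circle via Lemma~\ref{knwnr(C)}, and the disk-to-circle reduction supplied by convexity of $\lambda_{\max}$. Your extreme-point (or homogeneity) justification of that last step is exactly the detail the paper leaves implicit.
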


\subsection{Application to computing the spectral and nuclear norms of $2\times m\times n$ real tensor}\label{subsec:tspecnrm}
Let $\F$ be either $\R$ or $\C$.
Denote by 
$$\Sigma(\F^l)=\{\x\in \F^l, \|\x\|=1\}$$ the unit sphere in $\F^l.$
Let  $\F^l\otimes\F^m\otimes \F^n$ be the space of $3$-tensors $\cT=[t_{ijk}], i\in[l],j\in[m],k\in[n]$.    Then $\cT(\x,\y,\z)$ is the trilinear form $\sum_{i=j=k=1}^{l,m,n} t_{ijk}x_iy_jz_k$.
The spectral norm of $\cT$ over $\F$ is defined as
\begin{equation*}
\|\cT\|_\sigma=\max\{|\cT(\x,\y,\z)|:\,\x\in\Sigma(\F^l), \y\in\Sigma(\F^m),\z\in\Sigma(\F^n)\}.
\end{equation*}
For $i \in [l]$, let 
\begin{equation}\label{defFi}
F_i=[t_{ijk}],  \quad j\in[m], k\in[n]\in\F^{m\times n}.   
\end{equation}
It is straightforward to see \blue{that for $x = (x_1, \dots, x_l)^t \in \F^l$,}
\begin{equation*}
\|\sum_{i=1}^l x_i F_i\|=\max\{|\cT(\x,\y,\z)|:\,\y\in \Sigma(\F^m),\z\in\Sigma(\F^n)\}.
\end{equation*}
Hence
\begin{equation}\label{maxeq}
\|\cT\|_\sigma=\max\left\{\|\sum_{i=1}^l x_i F_i\|: \x\in\Sigma(\F^l)\right\}.
\end{equation}

The dual of the spectral norm is $\|\cdot\|_\sigma^\vee$ is the nuclear norm denoted as $\|\cdot\|_*$ \cite{FL18}:
\begin{equation}\label{nucchar}
\|\cT\|_*=\min\left\{\sum_{i=1}^N \|\x_i\|\|\y_i\|\|\z_i\|:\, \cT=\sum_{i=1}^N \x_i\otimes \y_i\otimes \z_i,  \x_i\in\F^l,\y_i\in\F^m,\z_i\in\F^n\right\}.
\end{equation}
Let $\dim_\R\F^l=l,  \dim_\R\C^l=2l$.
Caratheodory's theorem yields that one can choose 
$N=(\dim_\R\F^l)(\dim_\R\F^m)(\dim_\R\F^n)+1$.
In quantum mechanics,  $\|\cT\|_*$ measure the energy of $\cT$ \cite{BFZ}. 

\begin{theorem}\label{specnrm2mnchar}  Let $\cT=[t_{ijk}]\in \R^2\otimes\R^m\otimes \R^n$, where $m,n\ge 2$.  Suppose
$C=S(F_1) +\bi  S(F_2)= \begin{bmatrix} 0_{m\times m} & F_1 + \bi F_2 \cr F_1^* - \bi F_2^* & 0_{n\times n} \cr\end{bmatrix}$ with 
$F_1 = (t_{1jk}), F_2 = (t_{2jk}) \in \C^{m\times n}$. Then
\begin{equation}\label{specnrm2mnchar1}
\|\cT\|_\sigma=r(C) \quad\hbox{ and } \quad  \|\cT\|_*=\frac{1}{2}r^\vee(C).
\end{equation}

In particular, if $\cT\in \Q^2\otimes\Q^m\otimes \Q^n$ then  $\|\cT\|_\sigma$ and $\|\cT\|_*$ are  polynomially computable. 
\end{theorem}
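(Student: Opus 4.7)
The plan is to prove the two equalities in~\eqref{specnrm2mnchar1} separately; the polynomial-computability claim will then follow immediately from Theorem~\ref{ptcrC}, because $\cT \in \Q^2 \otimes \Q^m \otimes \Q^n$ forces $C$ to have entries in $\Q[\bi]$.

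For $\|\cT\|_\sigma = r(C)$ I will apply Kippenhahn's formula. Parameterizing $\Sigma(\R^2)$ as $(\cos\theta,\sin\theta)$, the characterization~\eqref{maxeq} gives $\|\cT\|_\sigma = \max_\theta \|\cos\theta\,F_1 + \sin\theta\,F_2\|$. Writing $C = A + \bi B$ with $A = S(F_1)$ and $B = S(F_2)$, both in $\rH_{m+n}$, Lemma~\ref{knwnr(C)} yields $r(C) = \max_\theta \lambda_{\max}(S(\cos\theta\,F_1 + \sin\theta\,F_2))$. The two expressions coincide once one observes that $\lambda_{\max}(S(G)) = \|G\|$ for any matrix $G$ (the nonzero eigenvalues of $S(G)$ are the $\pm\sigma_i(G)$).

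For $\|\cT\|_* = \tfrac12 r^\vee(C)$ my approach is duality. Let $\Phi\colon \cT \mapsto C$ denote the real-linear embedding into $\C^{(m+n)\times(m+n)}$, and let $V = \Phi(\R^2\otimes\R^m\otimes\R^n)$, which turns out to consist of matrices $D$ with $D_{11} = D_{22} = 0$ and $D_{21} = D_{12}^T$ (ordinary transpose). A short entrywise calculation will establish the bracket identity $\Re\tr(\Phi(\cS)^*\Phi(\cT)) = 2\langle\cS,\cT\rangle$ for every real tensor $\cS$. Combining this identity with the duality formulas~\eqref{charrvee} and the already-proved first equality gives
\[
2\|\cT\|_* \;=\; 2\sup_{\|\cS\|_\sigma\le 1}\langle\cS,\cT\rangle \;=\; \sup_{D\in V,\ r(D)\le 1}\Re\tr(D^*C) \;\le\; \sup_{r(D)\le 1}\Re\tr(D^*C) \;=\; r^\vee(C),
\]
which is half of the desired equality.

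The main obstacle is the reverse inequality, since the supremum defining $r^\vee(C)$ ranges over all of $\C^{(m+n)\times(m+n)}$ while we only control $D\in V$. I will overcome this by constructing a real-linear retraction $\pi\colon \C^{(m+n)\times(m+n)} \to V$ that does not increase $r(\cdot)$ and preserves $\Re\tr(\cdot^*C)$. The construction will exploit two symmetries of $C$: conjugation by the block-sign unitary $U = \diag(I_m,-I_n)$ sends $C$ to $UCU^* = -C$ (because $C$ has no diagonal blocks), and the ordinary transpose satisfies $C^T = C$ (immediate from the block structure). Accordingly I set $F^\sharp = \tfrac12(F - U^*FU)$ to kill the diagonal blocks of $F$, and then $\pi(F) = \tfrac12(F^\sharp + (F^\sharp)^T)$ to impose the block-transpose symmetry, so that $\pi(F)\in V$. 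Convexity of $r(\cdot)$ together with $r(U^*FU) = r(F^T) = r(F)$ yields $r(\pi(F)) \le r(F)$, while the trace identity reduces to two short index manipulations that use $UCU^* = -C$ and $C^T = C$. With $\pi$ in hand, the two suprema in the display above coincide, completing the equality $r^\vee(C) = 2\|\cT\|_*$.
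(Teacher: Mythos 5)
Your proposal is correct and follows essentially the same route as the paper: the first equality is the paper's own combination of \eqref{maxeq}, Lemma~\ref{knwnr(C)}, and $\|G\|=\lambda_{\max}(S(G))$, and your retraction $\pi(F)=\tfrac12\bigl(F^\sharp+(F^\sharp)^\top\bigr)$ with $F^\sharp=\tfrac12\bigl(F-U^*FU\bigr)$, $U=\diag(I_m,-I_n)$, is precisely the symmetrization proved in the paper's Lemma~\ref{rveesCchar}, while your bracket identity $\Re\tr\bigl(\Phi(\cS)^*\Phi(\cT)\bigr)=2\langle\cS,\cT\rangle$ plays the role of the paper's comparison \eqref{chardnu}. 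The only cosmetic difference is that the paper routes the duality through the identification of $\cT$ with $[F_1\,F_2]\in\R^{m\times(2n)}$ and the auxiliary norm $\nu$, whereas you work directly with the embedding $\Phi$ and prove the two inequalities separately; the underlying mathematics is identical.
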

To prove this theorem we need the following lemma:
\begin{lemma}\label{rveesCchar} For 
$$C=\begin{bmatrix}0_{m\times m}&T\\T^\top&0_{n\times n}\end{bmatrix}, \quad T\in\C^{m\times n}$$  
the following equality holds:
\begin{equation}\label{rveesCchar1}
\begin{aligned}
r^\vee(C)=\max\{\Re\,\tr C D^*: \,r(D)\le 1,\\
D=\begin{bmatrix}0_{m\times m}&U\\U^\top&0_{n\times n}\end{bmatrix}, U\in\C^{m\times n}\}.
\end{aligned}
\end{equation}
\end{lemma}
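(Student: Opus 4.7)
The plan is to start from the standard duality
$r^\vee(C) = \max\{\Re\tr CD^* : D \in \C^{(m+n)\times(m+n)},\ r(D)\le 1\}$,
which is the second line of \eqref{charrvee} together with $\tr F^*C = \tr CF^*$. The inequality $r^\vee(C) \ge$ RHS of \eqref{rveesCchar1} is immediate, since the RHS maximises the same objective over a subset of $\{D:r(D)\le 1\}$. For the reverse inequality I would exhibit, for each such $D$, a matrix $\tilde D$ of the block antidiagonal form $\begin{bmatrix}0 & U' \\ (U')^\top & 0\end{bmatrix}$ satisfying $r(\tilde D)\le r(D)$ and $\Re\tr C\tilde D^* = \Re\tr CD^*$; this will be carried out by two averaging steps $\Phi_2\circ\Phi_1$.

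First step. Let $W=\diag(I_m,-I_n)$, an involutive unitary. The block computation $WCW=-C$ combined with cyclicity of trace gives $\tr C(WDW)^* = \tr(WCW)D^* = -\tr CD^*$. Hence $\Phi_1(D):=(D-WDW)/2$ replaces the $m\times m$ and $n\times n$ diagonal blocks of $D$ by zero, preserves $\tr CD^*$ exactly, and by convexity of $r(\cdot)$ together with $r(WDW)=r(D)$ satisfies $r(\Phi_1(D))\le r(D)$. After applying $\Phi_1$ I may assume $D=\begin{bmatrix}0&U\\V&0\end{bmatrix}$.

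Second step. Set $\Phi_2(D):=(D+D^\top)/2$. I would first record the identity $r(E)=r(E^\top)$: writing $E=A+\bi B$ with Hermitian $A,B$, Lemma \ref{knwnr(C)} reduces this to the observation that $(\cos\theta A+\sin\theta B)^\top$ is the entrywise conjugate of a Hermitian matrix and hence has the same real spectrum. Next, a short index shuffle gives $\tr(T\bar V)=\tr(T^\top V^*)$ and $\tr(T^\top\bar U)=\tr(TU^*)$, so $\tr C(D^\top)^* = \tr C\bar D = \tr(T\bar V)+\tr(T^\top\bar U)=\tr CD^*$. Therefore $\Phi_2$ preserves the trace and, by convexity plus $r(D^\top)=r(D)$, does not increase $r$; and $\Phi_2(D)$ has the stated form with $U'=(U+V^\top)/2$.

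I expect the only delicate bookkeeping to be the identity $r(E)=r(E^\top)$ and the two trace identities above; both are routine once the transpose/conjugate conventions are tracked carefully. With those in hand the composition $\tilde D=\Phi_2(\Phi_1(D))$ furnishes the required feasible point of the RHS matching the value $\Re\tr CD^*$, yielding the reverse inequality.
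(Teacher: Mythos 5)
Your proposal is correct and is essentially the paper's own proof: the paper takes an arbitrary $E$ with $r(E)\le 1$, compresses it to its off-diagonal blocks via $G=\tfrac{1}{2}(E-RER^*)$ with $R=\diag(I_m,-I_n)$ and then symmetrizes via $D=\tfrac{1}{2}(G+G^\top)$, which is exactly your composition $\Phi_2\circ\Phi_1$, relying on the same three ingredients (unitary invariance and subadditivity of $r$, the identity $r(E^\top)=r(E)$, and the invariance of $\tr CE^*$ under both averagings). The only cosmetic differences are that you verify $r(E^\top)=r(E)$ via Lemma \ref{knwnr(C)} and the trace identities by an index computation, where the paper simply recalls the former and obtains the latter from $\tr CG^*=\tr(CG^*)^\top=\tr C^\top\bar G=\tr CD^*$ using $C^\top=C$.
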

\begin{proof}
Let 
\begin{equation*}
\begin{aligned}
E=\begin{bmatrix}E_1&E_2\\E_3&E_4\end{bmatrix},  \quad G=\begin{bmatrix}0&E_2\\E_3&0\end{bmatrix},  \quad D=\frac{1}{2} (G+G^\top),\\
E_1\in\C^{m\times m},  \quad 
 E_2\in\C^{m\times n},E_3\in\C^{n\times m}, E_4\in \C^{n\times n}.
\end{aligned}
\end{equation*}
As $R=\diag(I_m,-I_n)\in \C^{(m+n)\times (m+n)}$ is unitary we have $r(E)=r(-RER^*)$.  Recall that $r(E)=r(E^\top)$.
Hence
\begin{equation*}
\begin{aligned}
r(G)=r(\frac{1}{2}(E-RER^*))\le r(E)\Rightarrow r(D)=r(\frac{1}{2}(G+G^\top))\le r(G)\le r(E).
\end{aligned}
\end{equation*}
Recall that  $r^\vee(C)=\max\{\Re\tr CE^*: r(E)\le 1\}$.
Observe that
$$\tr CE^*=\tr CG^*= \tr(CG^*)^\top = \tr C^\top \bar G= \tr CD^*.$$ 
This proves \eqref{rveesCchar1}.
\end{proof}

\textbf{Proof of Theorem \ref{specnrm2mnchar}.}
 Recall that for $F\in\R^{m\times n}$ one has the equality
$$\|F\|=\|S(F)\|=\lambda_{\max}(S(F))=-\lambda_{\min}(S(F)).$$
Use \eqref{maxeq}, Corollary \ref{maxcharpenc} and the above equality to deduce that
\begin{equation*}
\|\cT\|_\sigma =\max_{\theta\in[0,2\pi)} \|\cos\theta F_1+\sin\theta F_2\|=\max_{\theta\in[0,2\pi)} \lambda_1(\cos\theta S(F_1)+\sin\theta S(F_2)).
\end{equation*}
Use Lemma \ref{knwnr(C)} and Theorem \ref{ptcrC} to deduce the proposition for $\|\cT\|_\sigma$.

We now discuss $\|\cT\|_\sigma^\vee$.   Identify $\R^2\otimes \R^m\otimes \R^n$ with $\R^{m\times (2n)}$.   That is, we
identify $\cT\in \R^2\otimes \R^m\otimes \R^n$ with $\tilde F=[F_1\,F_2]\in \R^{m\times (2n)}$.  
Note that the inner product on $\R^2\otimes \R^m\otimes \R^n$ is equal to the standard inner product on $\R^{m\times (2n)}$:
\begin{equation*}
\langle \cF,\cG\rangle=\sum_{i=j=k=1}^{2,m,n} t_{ijk}g_{ijk}=\tr \tilde F \tilde G^\top.
\end{equation*}

Define a norm $\R^{m\times (2n)}$ by the equality 
$$\nu(\tilde F)=\|\cT\|_\sigma=r(S(F_1)+\bi S(F_2)).$$
The definition of the dual norm $\nu^\vee(\cdot)$ yields \cite{FL16}:
\begin{equation}\label{chardnu}
\nu^\vee(\tilde F)=\max_{\nu(\tilde G)\le 1} \tr \tilde F \tilde G^\top=\max_{\nu(\tilde G)\le 1} \tr (F_1 G_1^\top+F_2 G_2^\top)=\|\cT\|_\sigma^\vee.
\end{equation}
We now compare the above characterization with $r^\vee(C)$, where $C$ as in Theorem \ref{specnrm2mnchar}, and $T=F_1+\bi F_2$.  Use characterization \eqref{rveesCchar1}  with $U=U_1+\bi U_2, U_1,U_2\in \R^{m\times n}$, and the first equality of \eqref{specnrm2mnchar1} to deduce that
\begin{eqnarray*}
r^\vee(C)&=&\max\{2\Re TU^*: r\left(\begin{bmatrix} 0 & U \cr U^\top & 0\cr\end{bmatrix}\right)\le 1\}\\
&=& \max\{2\tr(F_1U_1^\top +F_2U_2^\top): \nu(\tilde U)\le 1\}.
\end{eqnarray*}
Compare that with \eqref{chardnu} to deduce the second equality of \eqref{specnrm2mnchar1}.\qed

We remark that in view of NP-Hardness of computation of the spectral norm of $3$-tensor $\cT$ \cite{HL13}, there is no simple SDP characterization of the spectral norm for general $\cT\in \F^l\otimes\F^m\otimes\F^n$.  
\section*{Acknowledgment}
We thank Michael Overton for useful remarks.
The work of both authors is partially supported by the Collaboration Grant for Mathematicians of the Simons Foundation
(Grant numbers: 580037 and 851334).

\end{document}